\numberwithin{equation}{section}
\newcommand{\bea}{\begin{eqnarray}}
\newcommand{\eea}{\end{eqnarray}}
\newcommand{\be}{\begin{eqnarray*}}
\newcommand{\ee}{\end{eqnarray*}}
\newtheorem{theorem}{Theorem}[section]
\newtheorem{lemma}{Lemma}[section]
\newtheorem{corollary}{Corollary}[section]
\newtheorem{example}{Example}[section]
\begin{document}
\title[Isomorphism Classes of Idempotent Evolution Algebras]{Isomorphism Classes of \\ Idempotent Evolution Algebras}
\author[Yangjiang Wei]{Yangjiang Wei}
\address{School of Mathematics and Statistics, Nanning Normal University, Nanning 530100, P. R. China}
\email{gus02@163.com}
\author[Yi Ming Zou]{Yi Ming Zou$^{\ast}$}
\address{Department of Mathematical Sciences, University of Wisconsin, Milwaukee, WI 53201, USA} \email{ymzou@uwm.edu}
\thanks{Keywords: Idempotent evolution algebra, finite field, isomorphism class, counting formula}
\thanks{MSC2010: primary 17D92, secondary 05A05}
\thanks{* Corresponding author. Email: ymzou@uwm.edu}
\maketitle
\begin{abstract}
We showed that isomorphism classes of idempotent evolution algebras are in bijection with the orbits of the semidirect product group of the symmetric group and the torus, considered the combinatoric problem of enumeration of isomorphism classes for these algebras over arbitrary finite fields, derived a general counting formula, and obtained explicit formulas for the numbers of isomorphism classes in dimensions $2$, $3$, and $4$ over any finite field.
\end{abstract}
\par
\medskip
\section{Introduction}
Evolution algebras are non-associative and commutative algebras motivated by the evolution laws of genetics \cite{Tian1}, they have applications in many other areas and have been gaining more attentions recently. In particular, the classification problem of evolution algebras has been considered in dimensions $2$, $3$, and $4$ \cite{Beh, Cas1, Cas2, Cas3, Cas4}.  From these results, it is clear that in general, such a classification is complicated: already in the case of dimensions $3$ and $4$, these results provide long lists of evolution algebras over certain fields even with incomplete classifications. So far, it is unclear what would be a good way to classify these algebras (compare with the comment in \cite[p.~187]{Art1} on the list of crystallographic groups). Here we consider the isomorphism problem of a special class of finite dimensional evolution algebras that we call {\it idempotent evolution algebras}, and considered the combinatoric problem of enumeration of isomorphism classes for these algebras over arbitrary finite fields. These finite dimensional evolution algebras have been shown to process many interesting properties that related to other topics, such as combinatoric, group theory, and matrices. In addition, for each dimension $n$, the idempotent evolution algebras form a dense subset of the set of $n$-dimensional evolution algebras \cite{Eld1, Eld2, Sri1}. We recall some relevant definitions and results.
\par
An $n$-dimensional evolution algebra $\mathcal E$ over an arbitrary field $\mathbb F$ can be defined by using a natural basis $e_1,...,e_n$ of $\mathcal E$ as a vector space over $\mathbb F$, and a structure matrix $A =(a_{ij}),\;a_{ij}\in \mathbb{F}, \;1\le i, j\le n$, such that the following conditions hold \cite[p.~20]{Tian1}:
$e_ie_j = 0$, if $i\ne j$, and $(e_1^2, ..., e_n^2) = (e_1, ..., e_n)A$.
We denote by $\mathcal{E}(A)$ the evolution algebra with the structure matrix $A$ if we need to specify $A$. The commutativity follows from the definition, but no associativity is assumed.
We call an evolution algebra $\mathcal E$ {\it idempotent} if $\mathcal{E}^2 = \mathcal E$. 
\par
It was shown in \cite[Prop.~4.2]{Eld1} that
\bea\label{e12}
\qquad\quad\mathcal{E}^2 = \mathcal E\Leftrightarrow e_1^2, ..., e_n^2\; \mbox{form a basis of}\; \mathcal E\Leftrightarrow A\; \mbox{is nonsingular}; 
\eea
and shown in \cite[Thm.~4.8]{Eld1} that the automorphism group of a finite
dimensional idempotent evolution algebra over an arbitrary field is finite. In \cite[Thm.~3.2]{Eld2}, the automorphism groups of these finite dimensional evolution algebras were described in terms of an associated graph. This allows to prove
that every finite group can be represented as the full automorphism group of an
idempotent evolution over a field of characteristic 0 \cite[Thm.~3.1]{Sri1}, or over
an arbitrary field \cite[Thm.~1.1]{Cos1}. Moreover, it was shown \cite[Thm.~A]{Cos2} that this evolution algebra can be
chosen simple.  
\par
 Let $\mathcal{E}$ (resp. $\mathcal{E}'$) be an idempotent evolution algebra with a natural basis $e_1, ..., e_n$ (resp. $e_1', ..., e_n'$) and the structure matrix $A =(a_{ij})$ (resp. $B = (b_{ij})$). For a matrix $M=(m_{ij})$, let $M^{(2)}$ denote the matrix $(m_{ij}^2)$. Recall the following \cite[Thm.~4.4]{Eld1}:
 \begin{theorem}[Elduque and Labra] \label{L1}
Two idempotent evolution algebras $\mathcal{E}(A)$ and $\mathcal{E}(B)$ are isomorphic if and only if there exists a permutation $\sigma\in S_n$, where $S_n$ is the symmetry group on $n$ objects, and an $n\times n$ invertible matrix $P=(p_{ij})$, such that $p_{ij}\ne 0 \Leftrightarrow i=\sigma(j)$ and $P^{-1}BP^{(2)} = A$.
\end{theorem}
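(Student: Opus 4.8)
The plan is to reduce the statement to a rigidity property of natural bases in the idempotent case, and then to transport the structure equations through a candidate isomorphism. The decisive step will be the following lemma: if $e_1,\dots,e_n$ and $u_1,\dots,u_n$ are two natural bases of the \emph{same} idempotent evolution algebra $\mathcal E$, and $C=(c_{ij})$ is the (invertible) change-of-basis matrix, $u_j=\sum_i c_{ij}e_i$, then $C$ is monomial: there is $\tau\in S_n$ with $c_{ij}\ne 0\Leftrightarrow i=\tau(j)$. To see this, for $j\ne k$ the orthogonality $u_ju_k=0$ together with $e_ie_l=0$ for $i\ne l$ gives $\sum_i c_{ij}c_{ik}e_i^2=0$; since $\mathcal E$ is idempotent, the equivalences in \eqref{e12} show $e_1^2,\dots,e_n^2$ are linearly independent, so $c_{ij}c_{ik}=0$ for every $i$ and every $j\ne k$. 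Hence each row of $C$ has at most one nonzero entry; as $C$ is invertible it has no zero row and no two rows with their nonzero entry in the same column, so the assignment sending a row to the position of its unique nonzero entry is a bijection of $\{1,\dots,n\}$, which is exactly the statement that $C$ is monomial. I expect this lemma to be the main obstacle (and the place where idempotence — equivalently, nonsingularity of the structure matrix — is genuinely needed); it is precisely what fails for general evolution algebras.

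For the ``only if'' direction, suppose $f\colon\mathcal E(A)\to\mathcal E(B)$ is an algebra isomorphism. Since $f$ is bijective and multiplicative, $f(e_i)f(e_j)=f(e_ie_j)=0$ for $i\ne j$ and $f(e_1),\dots,f(e_n)$ spans $\mathcal E(B)$, so it is a natural basis of $\mathcal E(B)$. By the lemma there are $\sigma\in S_n$ and an invertible matrix $P=(p_{ij})$ with $p_{ij}\ne0\Leftrightarrow i=\sigma(j)$ and $f(e_j)=\sum_i p_{ij}e_i'$. Now apply $f$ to the structure relation $e_j^2=\sum_k a_{kj}e_k$: on one side $f(e_j^2)=f(e_j)^2=\sum_i p_{ij}^2(e_i')^2=\sum_l\bigl(BP^{(2)}\bigr)_{lj}e_l'$, using $e_i'e_m'=0$ for $i\ne m$ and the structure relations for $B$; on the other side $f\!\left(\sum_k a_{kj}e_k\right)=\sum_k a_{kj}\sum_l p_{lk}e_l'=\sum_l(PA)_{lj}e_l'$. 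Comparing the coefficient of each $e_l'$ gives $BP^{(2)}=PA$, i.e.\ $P^{-1}BP^{(2)}=A$.

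For the ``if'' direction, given $\sigma$ and $P$ as in the statement, I would define $f\colon\mathcal E(A)\to\mathcal E(B)$ on the natural basis by $f(e_j)=\sum_i p_{ij}e_i'$ and extend linearly; $f$ is a linear isomorphism since $P$ is invertible. To check multiplicativity it suffices to test basis products. For $j\ne k$ we have $f(e_je_k)=0$, while $f(e_j)f(e_k)=\sum_i p_{ij}p_{ik}(e_i')^2=0$ because $\sigma(j)\ne\sigma(k)$ forces columns $j$ and $k$ of $P$ to have disjoint support. For $j=k$, the computation of the previous paragraph run in reverse gives $f(e_j)^2=\sum_l\bigl(BP^{(2)}\bigr)_{lj}e_l'=\sum_l(PA)_{lj}e_l'=f(e_j^2)$, using $BP^{(2)}=PA$. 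Hence $f$ is an algebra isomorphism $\mathcal E(A)\cong\mathcal E(B)$, which completes the proof.
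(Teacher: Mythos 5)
The paper does not actually prove this statement; it is recalled verbatim from \cite[Thm.~4.4]{Eld1}, so there is no in-paper proof to compare against. Your argument is correct and is essentially the standard one behind that cited result: the rigidity lemma that natural bases of an idempotent evolution algebra are related by a monomial change-of-basis matrix (which is exactly where the linear independence of $e_1^2,\dots,e_n^2$ from \eqref{e12} is needed), followed by transporting the structure relations to get $BP^{(2)}=PA$, with the converse direction checked directly on the basis.
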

\par
Note that the matrix $P$ in the above theorem is the product of a permutation matrix and a nonsingular diagonal matrix. To simplify our notation, we will also write the $n\times n$ permutation matrix corresponding to a permutation $\sigma\in S_n$ defined by
\bea\label{e13}
P_{\sigma}\colon \;\; (e_1, ..., e_n) \longrightarrow (e_{\sigma(1)}, ..., e_{\sigma(n)}) = (e_1, ..., e_n)P_{\sigma} 
\eea
as $\sigma$. Then for any matrix unit $e_{ij}$ and any $n\times n$ matrix $A = (a_{ij})$, we have
\be
\sigma^{-1}e_{ij}\sigma = e_{\sigma^{-1}(i)\sigma^{-1}(j)}\quad\mbox{and}\quad \sigma^{-1}A\sigma = (a_{\sigma(i)\sigma(j)}),
\ee
that is, the $ij$-entry of $A$ is $a_{\sigma(i)\sigma(j)}$. 
\par\medskip
Let $GL_n(\mathbb{F})$ be the set of all nonsingular $n\times n$ matrices over $\mathbb{F}$, and let $T_n\subset GL_n(\mathbb{F})$ be the subset of the nonsingular diagonal matrices. For an element $t = \mbox{diag}(t_1, ..., t_n)\in T_n$, we can simply write $t = (t_1, ..., t_n)$ if there is no confusion. Let 
\be
G=S_n\ltimes T_n =\{\sigma t\; |\;\sigma\in S_n, t\in T_n\}
\ee 
be the semidirect product of the multiplicative groups $S_n$ and $T_n$ with the action of $S_n$ on $T_n$ given as follows. For $\sigma \in S_n$ and $t = (t_1, ..., t_n)\in T_n$,
\bea\label{e14}
 \sigma^{-1} (t_1, ..., t_n) \sigma = (t_{\sigma(1)}, ..., t_{\sigma(n)}) =: \sigma(t).
\eea
Then $G$ is a subgroup of $GL_n(\mathbb{F})$. Since the elements of $GL_n(\mathbb{F})$ are the defining (structure) matrices of the idempotent evolution algebras over $\mathbb{F}$, we are interested in the action of $G$ on $GL_n(\mathbb{F})$ given by Theorem \ref{L1} detailed below. 
\par
 Note that if $\sigma\in S_n, t\in T_n$, then since $t$ is a diagonal matrix and $\sigma$ is a $(0,1)$-matrix, we have
\bea\label{e15}
(\sigma t)^{(2)} =\sigma t^{(2)} = \sigma t^2,
\eea
where the $t^2$ at the end is the usual square of the diagonal matrix $t$. The following action of $G$ on $GL_n(\mathbb{F})$ is a generalization of the action defined in \cite[Eqn.~(12)]{Cas2}:
{\small \bea\label{e16}
\qquad \sigma t\colon A\mapsto (\sigma t)^{-1}A(\sigma t)^{(2)} = t^{-1}\sigma^{-1} A \sigma t^2, \; \sigma t \in G,\; A\in GL_n(\mathbb{F}).
\eea}
Then we have the following corollary of Theorem \ref{L1}, which is a generalization of  \cite[Prop.~3.2(iv)]{Cas2}:   
\begin{corollary}\label{C1} The isomorphism classes of $n$-dimensional idempotent evolution algebras over an arbitrary field $\mathbb F$ are in one-to-one correspondence with the orbits of the $G$-action on $GL_n(\mathbb{F})$ defined by (\ref{e16}).
\end{corollary}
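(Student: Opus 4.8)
The plan is to obtain Corollary \ref{C1} as a direct consequence of Theorem \ref{L1}, the only point requiring genuine work being the verification that (\ref{e16}) is in fact a group action. I would organize the argument in three steps: (i) identify the matrices $P$ permitted in Theorem \ref{L1} with the elements of $G$; (ii) check that (\ref{e16}) defines a $G$-action, so that ``being in the same orbit'' is an equivalence relation on $GL_n(\mathbb F)$; and (iii) assemble the bijection, using (\ref{e12}) to see that every matrix in $GL_n(\mathbb F)$ occurs as a structure matrix.

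For step (i): a matrix $P=(p_{ij})$ with $p_{ij}\ne 0\Leftrightarrow i=\sigma(j)$ has, in each column $j$, a single nonzero entry $d_j$, located in row $\sigma(j)$; invertibility of $P$ forces every $d_j\ne 0$. Hence $P=P_\sigma\,\mathrm{diag}(d_1,\dots,d_n)$, which in the abbreviated notation of (\ref{e13}) is precisely an element $\sigma t\in G$ with $t=(d_1,\dots,d_n)\in T_n$; conversely every $\sigma t\in G$ has this shape. By (\ref{e15}), $P^{(2)}=(\sigma t)^{(2)}=\sigma t^2$, so the condition $P^{-1}BP^{(2)}=A$ of Theorem \ref{L1} is exactly $A=(\sigma t)^{-1}B(\sigma t)^{(2)}$, i.e.\ the element $\sigma t\in G$ carries $B$ to $A$ under (\ref{e16}).

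For step (ii): since $G$ is a subgroup of $GL_n(\mathbb F)$, it is enough to check compatibility with products. Writing $A^g=g^{-1}Ag^{(2)}$ for $g\in G$, one has $(A^g)^h=h^{-1}g^{-1}Ag^{(2)}h^{(2)}$ and $A^{gh}=(gh)^{-1}A(gh)^{(2)}=h^{-1}g^{-1}A(gh)^{(2)}$; these coincide for all $A$ iff $(gh)^{(2)}=g^{(2)}h^{(2)}$ for $g,h\in G$. This identity fails for general invertible matrices, but it holds on $G$: for $g=\sigma_1 t_1$ and $h=\sigma_2 t_2$ one has $gh=\sigma_1\sigma_2\,s$ with $s:=\sigma_2(t_1)t_2\in T_n$ by (\ref{e14}), and applying (\ref{e15}) on both sides gives $(gh)^{(2)}=\sigma_1\sigma_2\,s^2$ and $g^{(2)}h^{(2)}=\sigma_1 t_1^2\,\sigma_2 t_2^2=\sigma_1\sigma_2\,\sigma_2(t_1^2)\,t_2^2=\sigma_1\sigma_2\,s^2$, using that the diagonal matrices involved commute and that squaring commutes with the permutation of diagonal entries. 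Thus (\ref{e16}) is a (right) action of $G$, and its orbits partition $GL_n(\mathbb F)$.

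For step (iii): by (\ref{e12}) every $A\in GL_n(\mathbb F)$ is the structure matrix of some $n$-dimensional idempotent evolution algebra, and by Theorem \ref{L1} together with step (i), $\mathcal E(A)\cong\mathcal E(B)$ if and only if $A$ and $B$ lie in the same $G$-orbit. Hence the map sending the isomorphism class of $\mathcal E(A)$ to the $G$-orbit of $A$ is well defined and injective, and it is surjective by the first sentence of this step; this is the asserted one-to-one correspondence. I expect the only real obstacle to be the multiplicativity of $M\mapsto M^{(2)}$ on the monomial group $G$ used in step (ii)---it is this feature, special to $G$ and not shared by $GL_n(\mathbb F)$, that makes (\ref{e16}) a legitimate action; the remaining steps are bookkeeping on top of Theorem \ref{L1}.
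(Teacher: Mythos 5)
Your proposal is correct and follows the same route the paper intends: Corollary \ref{C1} is deduced directly from Theorem \ref{L1} by identifying the admissible matrices $P$ with the monomial matrices $\sigma t\in G$ and reading $P^{-1}BP^{(2)}=A$ as the action (\ref{e16}); the paper simply leaves implicit the details you verify, notably that $M\mapsto M^{(2)}$ is multiplicative on $G$ so that (\ref{e16}) is a genuine (right) action with well-defined orbits, and that (\ref{e12}) gives surjectivity. Your write-up is a faithful, slightly more explicit version of the paper's argument.
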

\par\medskip
In the rest of this paper, we consider the combinatoric enumeration problem of these $G$-orbits over an arbitrary finite field. In section 2, we derive a counting formula for these orbits via Burnside's lemma. However, in order to actually enumerate the isomorphism classes, further computation method must be developed. Thus in section 3, we develop an approach to compute the numbers of these orbits by applying matrix theory. In this approach, certain determinants of order $n$ need to be considered, which leads to high order multivariable polynomial equations.  In this paper, we are able to derive explicit formulas for the numbers of isomorphism classes of idempotent evolution algebras in dimensions $2$, $3$, and $4$ over any finite field. These will be presented in sections $4$, $5$, and $6$, respectively.
\par
We should point out that, in the classification work of \cite{Beh, Cas2, Cas4} on evolution algebras of dimensions $3$ and $4$, it was assumed that these evolution algebras are over a field $\mathbb{F}$ of characteristic different from $2$ and in which every polynomial of the form $x^n-a$, for $n = 2,3,7$ and $a\in \mathbb{F}$, has a root in the field. These conditions exclude all finite fields: it is well-known that the multiplicative group of a finite field $\mathbb{F}_q$ of $q$ elements is a cyclic group \cite[Thm.~15.7.3]{Art1} of order $q-1$, so if the characteristic of the field is an odd prime, then every equation of the form $x^2-\xi=0$, where $\xi$ is a generator of the multiplicative group, has no solution in $\mathbb{F}_q$.  
\par\medskip
\section{A counting formula}
\par\medskip 
From now on, we assume that $\mathbb{F}_q$ is a finite field of $q$ elements, where $q = p^m$ for some prime integer $p$ and some integer $m > 0$. The $n$-dimensional idempotent evolution algebras over $\mathbb{F}_q$ are defined by the elements $A$ of $GL_n(\mathbb{F}_q)$ and the corresponding natural bases $(e_1, ..., e_n)$. The case $n = 1$ is trivial, so we assume that $n>1$. For the action of $G=S_n\ltimes T_n$ on $X := GL_n(\mathbb{F}_q)$ given by (\ref{e16}), Burnside's lemma says that the number of $G$-orbits in $X$ can be computed by the following formula:
\bea\label{e21}
|X/G| = \frac{1}{|G|}\sum_{x\in X}|G_x|=\frac{1}{|G|}\sum_{g\in G}|X^g|,
\eea
where $G_x = \{g\in G\;|\; gx=x\}$ is the stabilizer of $G$ at $x$, and $X^g = \{x\in X\;|\; gx=x\}$ is the set of fixed points of $g$.  Since $|G| = |S_n|\cdot |T_n| =n!(q-1)^n$ and  
\bea\label{e22}
 |X| = \prod_{i=0}^{n-1}(q^n - q^i) = q^{n(n-1)/2}\prod_{i=1}^n(q^i - 1),
\eea
except for some small cases, the number of summands in the second sum in (2.1) is much smaller than that of the first sum. We will use the group structure of $G$ to further reduce the number of terms in the summation.  The following lemma holds over any field $\mathbb{F}$.
\begin{lemma}\label{L2}
For $\sigma, \tau \in S_n$ and $t\in T_n$, $A\in X$ is a fixed point of $\sigma t$ if and only if $\tau^{-1}A\tau$ is a fixed point of $\tau^{-1}(\sigma t)\tau = (\tau^{-1}\sigma\tau)\tau(t)$.
\end{lemma}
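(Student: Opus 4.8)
The plan is to show that conjugation by the permutation matrix $\tau$ carries the $\sigma t$-action on $X$ to the $\tau^{-1}(\sigma t)\tau$-action, and hence carries fixed points to fixed points. Concretely, I will prove the operator identity
\[
\big(\tau^{-1}(\sigma t)\tau\big)\cdot(\tau^{-1}A\tau)=\tau^{-1}\big((\sigma t)\cdot A\big)\tau
\qquad(A\in X),
\]
where $g\cdot A := g^{-1}Ag^{(2)}$ as in (\ref{e16}). Once this is in hand the lemma is immediate: $B\mapsto\tau^{-1}B\tau$ is a bijection of $X$ (which is closed under conjugation), so $A=(\sigma t)\cdot A$ holds if and only if $\tau^{-1}A\tau=\tau^{-1}\big((\sigma t)\cdot A\big)\tau=\big(\tau^{-1}(\sigma t)\tau\big)\cdot(\tau^{-1}A\tau)$.

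To establish the identity I would first record that $h:=\tau^{-1}(\sigma t)\tau$ lies in $G$: writing $h=(\tau^{-1}\sigma\tau)(\tau^{-1}t\tau)$ and noting that $\tau^{-1}t\tau=\tau(t)\in T_n$ by (\ref{e14}) (conjugation by a permutation matrix merely permutes the diagonal entries), we get $h=\sigma't'$ with $\sigma'=\tau^{-1}\sigma\tau\in S_n$ and $t'=\tau(t)\in T_n$, which is precisely the asserted factorization $\tau^{-1}(\sigma t)\tau=(\tau^{-1}\sigma\tau)\tau(t)$. The one point needing a line of care is the behaviour of the entrywise-square operation $M\mapsto M^{(2)}$ under conjugation by $\tau$: since $\tau^{-1}M\tau=(m_{\tau(i)\tau(j)})$, squaring each entry gives $(m_{\tau(i)\tau(j)}^2)=\tau^{-1}M^{(2)}\tau$, i.e. $(\tau^{-1}M\tau)^{(2)}=\tau^{-1}M^{(2)}\tau$ for every matrix $M$ (here $M^{(2)}$ is entrywise, not the ordinary square; (\ref{e15}) is the special case $M=\sigma t$). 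Applying this with $M=\sigma t$ yields $h^{(2)}=\tau^{-1}(\sigma t)^{(2)}\tau$, while plainly $h^{-1}=\tau^{-1}(\sigma t)^{-1}\tau$.

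Combining these,
\[
h\cdot(\tau^{-1}A\tau)=h^{-1}(\tau^{-1}A\tau)h^{(2)}
=\tau^{-1}(\sigma t)^{-1}\tau\,\tau^{-1}A\tau\,\tau^{-1}(\sigma t)^{(2)}\tau
=\tau^{-1}\big((\sigma t)^{-1}A(\sigma t)^{(2)}\big)\tau,
\]
which is the desired identity, and the lemma follows. I do not expect any genuine obstacle here: the argument is a bookkeeping verification, and the only thing one must not overlook is that $(\cdot)^{(2)}$ is entrywise squaring rather than matrix squaring, so that the commutation $(\tau^{-1}M\tau)^{(2)}=\tau^{-1}M^{(2)}\tau$ has to be checked entrywise (which works precisely because $\tau$ is a $(0,1)$-matrix). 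The same computation in fact shows more than the stated equivalence: $B\mapsto\tau^{-1}B\tau$ restricts to a bijection between $X^{\sigma t}$ and $X^{\tau^{-1}(\sigma t)\tau}$, which is the form in which the lemma will be used to reduce the sum in (\ref{e21}) to one over conjugacy representatives in $S_n$.
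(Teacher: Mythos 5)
Your proof is correct, and the underlying idea (conjugation by $\tau$ intertwines the two actions) is the same as the paper's; the difference is in how much is verified. The paper disposes of the lemma in one line by citing the general group-action fact that a fixed point of $g$ is carried by $h$ to a fixed point of the conjugate of $g$, whereas you prove the intertwining identity $\bigl(\tau^{-1}(\sigma t)\tau\bigr)\cdot(\tau^{-1}A\tau)=\tau^{-1}\bigl((\sigma t)\cdot A\bigr)\tau$ directly from (\ref{e16}). Your version is more self-contained in two respects. First, the paper's appeal to the abstract fact tacitly presupposes that (\ref{e16}) defines a group action; one checks that $(gh)^{(2)}=g^{(2)}h^{(2)}$ for monomial matrices, so that $g\cdot(h\cdot A)=(hg)\cdot A$ and (\ref{e16}) is a \emph{right} action written in left notation --- which is exactly why the relevant conjugate in the lemma is $\tau^{-1}(\sigma t)\tau$ rather than $\tau(\sigma t)\tau^{-1}$. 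Your direct computation sidesteps this left/right bookkeeping entirely and is immune to the slip. Second, your observation that $(\tau^{-1}M\tau)^{(2)}=\tau^{-1}M^{(2)}\tau$ for an arbitrary matrix $M$ (conjugation by a permutation matrix merely permutes entries, so it commutes with entrywise squaring) is precisely the ingredient that makes the abstract argument legitimate, and it usefully generalizes (\ref{e15}). Your closing remark that $B\mapsto\tau^{-1}B\tau$ restricts to a bijection $X^{\sigma t}\to X^{\tau^{-1}(\sigma t)\tau}$ is also the form actually needed for (\ref{e27}), so nothing is missing; both proofs are sound, yours trading brevity for explicitness.
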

\begin{proof} 
This follows from a basic fact in group actions: if $x$ is a fixed point of $g$, i.e. $g\cdot x = x$, then $h\cdot x$ is a fixed point of $hgh^{-1}$. Notice that the last equality holds due to (\ref{e14}). 
\end{proof}
\par
We recall some basic facts about the symmetric group (see for example \cite{Sag1}). Let $\mathcal{C}(S_n)$ be the set of all conjugacy classes of $S_n$, and let $\mathcal{P}(n)$ be the set of all partitions of $n$. Then there is a one-to-one correspondence between $\mathcal{C}(S_n)$ and $\mathcal{P}(n)$. For our purpose, we use the following notation for a partition of $n$: 
\bea\label{e23}
\mu = \{0<\mu_1\le \mu_2\le \cdots \le \mu_r \},\;\mbox{such that}\; \sum_{i=1}^r\mu_i = n.
\eea
This is the reverse version of the more commonly used notation, since we want to place the shorter cycles of a permutation at the beginning. This will become clear later. 
\par
For a partition $\mu$ of $n$, we will also use $\mu$ to denote the permutation defined by partitioning $(1,2, ..., n)$ into disjoint cycles according to $\mu$. For example, if $n=7$ and $\mu =\{0<1\le 1<2 < 3\}$, then as a permutation $\mu = (1)(2)(34)(567)=(34)(567)$. For $\sigma\in S_n$, we denote by $m_k(\sigma), 1\le k\le n,$ the number of cycles of length $k$ in $\sigma$ when $\sigma$ is written as a disjoint product of cycles.  For the $\mu$ just mentioned, $m_1(\mu) = 2, m_2(\mu)= m_3(\mu) = 1$, and $m_k(\mu) = 0, k>3$.
\par
For $\mu\in\mathcal{P}(n)$, let $C(\mu)\in\mathcal{C}(S_n)$ be the conjugacy class defined by $\mu$ and let $c(\mu) = |C(\mu)|$ be the number of elements in $C(\mu)$. Let 
\be
d(\mu):=\prod_{k=1}^nm_k(\mu)!k^{m_k(\mu)}.
\ee 
Then we have \cite[page 3, Eqn. (1.2)]{Sag1}:
\bea\label{e24}
c(\mu) = \frac{n!}{d(\mu)}.
\eea
For $t\in T_n$, let $b(\mu,t) = |X^{\mu t}|$ be the number of fixed points of the group element $\mu t\in G = S_n\ltimes T_n$, and let 
\bea\label{e25}
B(\mu) := \frac{\sum_{t\in T_n}b(\mu, t)}{d(\mu)}.
\eea
\begin{theorem}\label{T1} 
For $n\ge 1$, the number of isomorphism classes of idempotent evolution algebras over $\mathbb{F}_q$ is given by
\bea\label{e26}
\mathcal{N}(n,\mathbb{F}_q) := |X/G| = \frac{\sum_{\mu\in \mathcal{P}(n)}B(\mu)}{(q-1)^n}.
\eea
\end{theorem}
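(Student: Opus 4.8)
The plan is to start from Burnside's formula (\ref{e21}), $|X/G| = \frac{1}{|G|}\sum_{g\in G}|X^g|$, and then reorganize the sum over $g = \sigma t \in G = S_n\ltimes T_n$ by first grouping the permutation part $\sigma$ according to its conjugacy class in $S_n$. The main tool is Lemma \ref{L2}: conjugating by $\tau\in S_n$ sends a fixed point of $\sigma t$ to a fixed point of $(\tau^{-1}\sigma\tau)\,\tau(t)$, which gives a bijection $X^{\sigma t}\to X^{\tau^{-1}(\sigma t)\tau}$, hence $|X^{\sigma t}| = |X^{(\tau^{-1}\sigma\tau)\tau(t)}|$. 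Thus $|X^{\sigma t}|$ depends on $\sigma$ only through its conjugacy class, provided we simultaneously move $t$ by the same $\tau$.

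The key computation is to show that for a fixed partition $\mu\in\mathcal{P}(n)$, summing $|X^{\sigma t}|$ over all $\sigma\in C(\mu)$ and all $t\in T_n$ yields $c(\mu)\sum_{t\in T_n} b(\mu,t)$. First I would fix a representative $\mu$ of the class $C(\mu)$ (the canonical cycle permutation described in the text). For each $\sigma\in C(\mu)$, choose $\tau_\sigma\in S_n$ with $\tau_\sigma^{-1}\sigma\tau_\sigma = \mu$. Then for each $t\in T_n$, Lemma \ref{L2} gives $|X^{\sigma t}| = |X^{\mu\,\tau_\sigma(t)}| = b(\mu, \tau_\sigma(t))$. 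As $t$ ranges over all of $T_n$, so does $\tau_\sigma(t)$ (the $S_n$-action (\ref{e14}) permutes the diagonal entries, hence permutes $T_n$ bijectively), so for each fixed $\sigma\in C(\mu)$ we get $\sum_{t\in T_n}|X^{\sigma t}| = \sum_{t\in T_n} b(\mu,t)$. Summing over the $c(\mu) = n!/d(\mu)$ elements $\sigma\in C(\mu)$ multiplies this by $c(\mu)$. Therefore
\bea\label{e27}
\sum_{g\in G}|X^g| = \sum_{\mu\in\mathcal{P}(n)} c(\mu)\sum_{t\in T_n} b(\mu,t) = \sum_{\mu\in\mathcal{P}(n)} \frac{n!}{d(\mu)}\sum_{t\in T_n} b(\mu,t) = n!\sum_{\mu\in\mathcal{P}(n)} B(\mu),
\eea
using (\ref{e24}) and the definition (\ref{e25}) of $B(\mu)$.

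Finally I would divide by $|G| = n!(q-1)^n$ to obtain $|X/G| = \frac{n!\sum_{\mu} B(\mu)}{n!(q-1)^n} = \frac{\sum_{\mu\in\mathcal{P}(n)} B(\mu)}{(q-1)^n}$, which is exactly (\ref{e26}); combining with Corollary \ref{C1} identifies this with $\mathcal{N}(n,\mathbb{F}_q)$. The case $n=1$ is checked directly (there is a single isomorphism class, since $X = \mathbb{F}_q^\times$ and $G = T_1 = \mathbb{F}_q^\times$ acting by $t\colon a\mapsto t^{-1}at^2 = ta$ is transitive), matching the formula. I expect the only real subtlety to be bookkeeping: making sure that the bijection from Lemma \ref{L2} is applied with the $t$-argument transformed consistently (it is $\tau(t)$, not $t$, that appears), and verifying that $t\mapsto \tau(t)$ is a bijection of $T_n$ so that the inner sum over $T_n$ is genuinely independent of the chosen representative $\sigma$ and of $\tau_\sigma$. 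Everything else is a direct substitution of the partition/conjugacy-class identities for $S_n$ into Burnside's formula.
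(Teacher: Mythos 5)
Your proposal is correct and follows essentially the same route as the paper: applying Burnside's lemma, using Lemma \ref{L2} together with the fact that $t\mapsto\tau(t)$ permutes $T_n$ to show $\sum_{t\in T_n}|X^{\sigma t}|$ depends only on the conjugacy class of $\sigma$, and then substituting $c(\mu)=n!/d(\mu)$ and the definition of $B(\mu)$. Your extra bookkeeping (explicit choice of $\tau_\sigma$ and the direct check of the case $n=1$) only spells out details the paper leaves implicit.
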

\begin{proof}
We first remark that for the trivial case $n=1$, the right side of the formula is $1$. Then assume that $n>1$. By Lemma 2.1, if $\sigma, \sigma'\in S_n$ are conjugates, say $\sigma' = \tau^{-1}\sigma\tau$, then $|X^{\sigma t}| = |X^{\sigma' \tau(t)}|$. Thus though $|X^{\sigma t}|$ may not be equal to  $|X^{\sigma' t}|$ for the same $t\in T_n$, since $\tau(T_n) = T_n$  (with the action of $\tau$ given by (\ref{e14})), we have:
\bea\label{e27}
\sum_{t\in T_n}|X^{\sigma t}| = \sum_{t\in T_n}|X^{\sigma' t}|.
\eea
Therefore, by Burnside's lemma (see (2.1)),
\be
|X/G| &=& \frac{1}{|G|}\sum_{g\in G}|X^g| 
= \frac{1}{|G|}\sum_{\mu\in \mathcal{P}(n)}\sum_{\sigma\in C(\mu)}\sum_{t\in T_n}|X^{\sigma t}|\\
{} &=& \frac{1}{(q-1)^nn!}\sum_{\mu\in \mathcal{P}(n)}c(\mu)\sum_{t\in T_n}|X^{\mu t}|\quad \mbox{(by (2.7))}\\
{} &=& \frac{1}{(q-1)^nn!}\sum_{\mu\in \mathcal{P}(n)}\frac{n!}{d(\mu)}\sum_{t\in T_n}b(\mu, t)\;\mbox{(by (2.4))}\\
{} &=& \frac{1}{(q-1)^n}\sum_{\mu\in \mathcal{P}(n)}\frac{\sum_{t\in T_n}b(\mu, t)}{d(\mu)}\\
{} &=&  \frac{\sum_{\mu\in \mathcal{P}(n)}B(\mu)}{(q-1)^n},\quad \mbox{(by (2.5))}
\ee
as desired.
\end{proof}
\par
To apply Theorem \ref{T1} to compute the numbers of isomorphism classes of idempotent evolution algebras over a finite field, one needs to develop a method to compute the numbers $b(\mu, t)$, which we will consider next.
\section{Computing $b(\mu, t)$}
Let $\sigma t\in G$, where $\sigma\in S_n$ and $t=(t_1, ..., t_n)\in T_n$. Recall that we also denote the corresponding permutation matrix of $\sigma$ defined by (\ref{e13}) as $\sigma$. By (\ref{e16}), $A = (a_{ij})\in X$ is fixed by $\sigma t$ if and only if
\bea\label{e31}
tA = \sigma^{-1} A \sigma t^2 \Longleftrightarrow t_ia_{ij} = a_{\sigma(i)\sigma(j)}t_j^2, \; 1\le i, j \le n.
\eea
Thus, given a partition $\mu$ of $n$ and a $t\in T_n$, the computation of $b(\mu, t)$ can be done by counting the solutions of the following system of linear equations
\bea\label{e32}
 t_ix_{ij} - t_j^2x_{\mu(i)\mu(j)} = 0, \quad1\le i, j \le n, 
\eea
that also satisfy the condition $\det (x_{ij})_{n\times n}\ne 0$. 
\par
For each $1\le i\le n$, the $n$ equations involving $x_{i1}, ..., x_{in}$ (the equations given by the $i$-th row of the matrix $(x_{ij})$) can be combined into a column as 
\bea\label{e33}
t_i(x_{i1}, ..., x_{in})^T - t^2(x_{\mu(i)\mu(1)}, ..., x_{\mu(i)\mu(n)})^T = 0,
\eea
where $t^2 = \mbox{diag}(t_1^2, ..., t_n^2)$ as before. Rewrite the above equation as 
\bea\label{e34}
t_i(x_{i1}, ..., x_{in})^T - t^2\mu^{-1}(x_{\mu(i)1}, ..., x_{\mu(i)n})^T = 0,
\eea
and let $x_i = (x_{i1}, ..., x_{in})$. Then the whole linear system (\ref{e32}) can be written as
\bea\label{e35}
\begin{pmatrix}
C_{11} &\cdots & C_{1n}\\
\vdots & {}    & \vdots\\
C_{n1} & \cdots & C_{nn}
\end{pmatrix}
\begin{pmatrix}
x_1^T \\ \vdots \\ x_n^T
\end{pmatrix} = 0,
\eea
where each $C_{ij}$ is a block matrix of size $n\times n$, and the $i$-th row of blocks is given by
\bea\label{e36}
C_{ii} &=& \begin{cases}
                   t_iI_n,&\mbox{if $\mu(i) \ne i$},\\
                   t_iI_n - t^2\mu^{-1}, & \mbox{if $\mu(i) = i$},
            \end{cases}\\
C_{ij} &=& \begin{cases}
                   0,&\mbox{if $j\ne \mu(i)$},\\
                   - t^2\mu^{-1}, & \mbox{if $j=\mu(i)$},
            \end{cases}\; (i\ne j).\nonumber
\eea
\par
Let the coefficient matrix of (\ref{e35}) be $C$. From the above formulas, we can see that the matrix $C$ is divided into blocks according to the cycle structure of $\mu$. Suppose $\mu = \{0< \mu_1\le \mu_2 \cdots\le \mu_r\}$, then $C$ is a diagonal block matrix
\bea\label{e37}
C = \begin{pmatrix}
         D_1 & {} & {}\\
         {}    &\ddots & {}\\
         {}   & {}  & D_r
      \end{pmatrix},
\eea
where each $D_i, 1\le i\le r$, is of size $n\mu_i\times n\mu_i$ determined as follows. 
\par\medskip
(1) $\mu_i = 1$. According to our notation, these $\mu_i$ come first. In this case, $D_i = C_{ii} = t_iI_n - t^2\mu^{-1}$ and all other $C_{ij}=0$ ($i\ne j$) on the same row since $\mu(i)=i$. 
\par\medskip
(2)  $\mu_i > 1$. Then according to our notation, the corresponding cycle of $\mu_i$ in $\mu$ is $(k+1, k+2, ..., k+\mu_i)$, where $k=\sum_{j<i}\mu_j$, so by (\ref{e36}),
\bea\label{e38}
D_i = \begin{pmatrix}
           t_{k+1}I_n & -t^2\mu^{-1} & {} & {}\\
           {}   & t_{k+2}I_n &  \ddots & {}\\
           {}  &   {}   &\ddots   & -t^2\mu^{-1}\\
           -t^2\mu^{-1} & {} & {} & t_{k+\mu_i}I_n
         \end{pmatrix}.
\eea
\par\medskip
Note that for each of the cases $\mu_i=1$, the $n\times n$ matrix $D_i$ has a structure that is similar to $C$. More precisely, $D_i=t_iI_n-t^2\mu^{-1} =\mbox{diag}(D_{i1}, ..., D_{ir})$ is a diagonal block matrix, such that $D_{ij} = t_i -t_j^2$ if $\mu_j =1$; and for $\mu_j>1$, 
\bea\label{e39}
\qquad D_{ij} = \begin{pmatrix}
           t_i & -t_{k+1}^2 & {} & {}\\
           {}   & t_i &  \ddots & {}\\
           {}  &   {}   &\ddots   & -t_{k+\mu_j-1}^2\\
           -t_{k+\mu_j}^2 & {} & {} & t_i
         \end{pmatrix}, \;\mbox{where}\; k=\sum_{s<j}\mu_s.
\eea
\par\medskip
\begin{example} We give two examples for the notation in the above discussions. 
\par
Let $n=3$ and let $\mu = \{0<1\le 1\le 1\}$. Then as a permutation, $\mu = (1)(2)(3) = (1)$, and as a permutation matrix, $\mu = I_3$. Thus, $C=\mbox{diag}(D_1,D_2,D_3)$, $t^2\mu^{-1} = \mbox{diag}(t_1^2, t_2^2, t_3^2)$, and 
\be
D_i = \mbox{diag}(t_i-t_1^2, t_i-t_2^2,t_i-t_3^2), \quad 1\le i\le 3.
\ee
\par\medskip
Let $n=4$ and let $\mu = \{0<1<3\}$. Then as a permutation $\mu = (1)(234)=(234)$, and as a permutation matrix (see (\ref{e13}))
\be
\quad\mu = \begin{pmatrix}
           1 & 0 & 0 & 0\\
           0 & 0 & 0 & 1 \\
           0 & 1 & 0 & 0 \\
           0 & 0 & 1 & 0
          \end{pmatrix} \;\mbox{and}\;\;
\mu^{-1} = \begin{pmatrix}
           1 & 0 & 0 & 0\\
           0 & 0 & 0 & 1 \\
           0 & 1 & 0 & 0 \\
           0 & 0 & 1 & 0
          \end{pmatrix}^T = 
          \begin{pmatrix}
           1 & 0 & 0 & 0\\
           0 & 0 & 1 & 0 \\
           0 & 0 & 0 & 1 \\
           0 & 1 & 0 & 0
          \end{pmatrix}. 
\ee
Thus $C = \mbox{diag}(D_1,D_2)$,
\be
t^2\mu^{-1} =  \begin{pmatrix}
           t_1^2 & 0 & 0 & 0\\
           0 & 0 & t_2^2 & 0 \\
           0 & 0 & 0 & t_3^2 \\
           0 & t_4^2 & 0 & 0
          \end{pmatrix},\;\; 
D_1 = \begin{pmatrix}
           t_1 - t_1^2 & 0 & 0 & 0\\
           0 & t_1 & -t_2^2 & 0 \\
           0 & 0 & t_1 & -t_3^2 \\
           0 & -t_4^2 & 0 & t_1
          \end{pmatrix},
\ee
and 
\be
D_2 = \begin{pmatrix}
           t_2I_4 & -t^2\mu^{-1} & 0 \\
           0 & t_3I_4 & -t^2\mu^{-1} \\
           -t^2\mu^{-1} & 0 & t_4I_4 
          \end{pmatrix}_{12\times 12}. 
\ee
\end{example}
\par\medskip
By using elementary row operations, we can further reduce the matrices $D_i$ in (\ref{e38}) and $D_{ij}$ in (\ref{e39}). Let  
\bea\label{e310}
k_1 = 0 \;\mbox{and}\; k_i =\sum_{s<i}\mu_s, 1< i\le r. 
\eea
If $\mu_i = 1$ and $\mu_j>1$, then the matrix $D_{ij}$ in (\ref{e39}) can be row reduced to
\bea\label{e311}
D'_{ij} = \begin{pmatrix}
           t_i & -t_{k_j+1}^2 & {} & {}\\
           {}   & \ddots &  \ddots & {}\\
           {}  &   {}   &t_i   & -t_{k_j+\mu_j-1}^2\\
           0 & \cdots & 0 & m_{ij}
         \end{pmatrix}, 
\eea
where 
\bea\label{e312}
m_{ij} = t_i^{\mu_j} - \prod_{s=1}^{\mu_j}t^2_{k_j+s}.
\eea
Similarly, $D_i$ in (\ref{e38}) can be row reduced to
\bea\label{e313}
D'_i = \begin{pmatrix}
           t_{k_i+1}I_n & -t^2\mu^{-1} & {} & {}\\
           {}   &\ddots  &  \ddots & {}\\
           {}  &   {}   &t_{k_i+\mu_i-1}I_n   & -t^2\mu^{-1}\\
           0 & \cdots & 0 & R_i
         \end{pmatrix},
\eea
where 
\bea\label{e314}
R_i = (\prod_{s=1}^{\mu_i}t_{k_i+s})I_n - (t^2\mu^{-1})^{\mu_i}.
\eea
\par
We summarize our discussions in the following theorem, which further reduces the number of $t\in T_n$ we need to consider in the computation.
\begin{theorem} Given a partition $\mu = \{0<\mu_1\le \mu_2 \le\cdots\le\mu_r\}$ of $n$, assume that $\mu_i = 1$ for $i =1, ..., r'$ (if no $\mu_i=1$, then $r'=0$) and $\mu_i > 1$ for  $i =r'+1, ..., r$. In order for the linear system (\ref{e34}) to have nontrivial solutions for every $1\le i\le n$, it is necessary and sufficient that 
\bea\label{e315}
\qquad\;\;\prod_{j=1}^{r'}(t_i-t_j^2)\prod_{j=r'+1}^r(t_i^{\mu_j} - \prod_{s=1}^{\mu_j}t^2_{k_j+s}) = 0,\; \mbox{$\forall\;i$ such that $\mu_i=1$;}
\eea
and
\bea\label{e316}
\qquad\det((\prod_{j=1}^{\mu_i}t_{k_i+j})I_n - (t^2\mu^{-1})^{\mu_i})=0,\; \mbox{$\forall \;i$ such that $\mu_i>1$}.
\eea
\end{theorem}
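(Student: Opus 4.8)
The plan is to start from the block-diagonal decomposition $C=\mathrm{diag}(D_1,\dots,D_r)$ established in (\ref{e37}), and to observe that the linear system (\ref{e34}) has a nontrivial solution if and only if $\det C = 0$, which, by block-diagonality, happens exactly when $\det D_i = 0$ for at least one $i$. However, the statement is stronger: it requires that \emph{for every} $1\le i\le n$ the system (\ref{e34}) (the equations coming from the $i$-th row of $(x_{jk})$) has a nontrivial solution, because a nonsingular $(x_{jk})$ cannot have a zero row. So what must actually be shown is that for each fixed row index $i$, the homogeneous system in the unknowns $x_{i1},\dots,x_{in}$ has a nontrivial solution, and this needs to be translated into the stated determinantal conditions. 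I would organize the proof around the two cases $\mu_i=1$ and $\mu_i>1$ separately, matching the two displayed conditions (\ref{e315}) and (\ref{e316}).

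First I would handle the case $\mu_i=1$. Here the row index $i$ is a fixed point of $\mu$, so the relevant equations decouple from the other rows and are governed precisely by the matrix $D_i = t_iI_n - t^2\mu^{-1}$ described in (\ref{e39}); the $i$-th row block admits a nontrivial solution in $x_{i1},\dots,x_{in}$ iff $\det D_i = 0$. Then I would invoke the further block decomposition $D_i = \mathrm{diag}(D_{i1},\dots,D_{ir})$ and the row reductions (\ref{e311})--(\ref{e314}): $\det D_{ij}=t_i-t_j^2$ when $\mu_j=1$, and $\det D_{ij} = t_i^{\mu_j-1}m_{ij}$ (up to a nonzero factor — in fact the reduction in (\ref{e311}) shows $\det D'_{ij} = t_i^{\mu_j-1}m_{ij}$, and row operations preserve the vanishing of the determinant) with $m_{ij}$ as in (\ref{e312}). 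Since $t_i\ne 0$ (as $t\in T_n$), $\det D_i=0$ is equivalent to the product in (\ref{e315}) being zero. This gives (\ref{e315}). The analogous argument for $\mu_i>1$ uses (\ref{e313})--(\ref{e314}): the row reduction of $D_i$ shows that, up to invertible row operations, $\det D_i$ vanishes iff $\det R_i=0$ where $R_i = (\prod_{s=1}^{\mu_i}t_{k_i+s})I_n - (t^2\mu^{-1})^{\mu_i}$, and this is condition (\ref{e316}). One small point to spell out: all rows $i$ belonging to the \emph{same} cycle of $\mu$ contribute the same block $D_i$ (because $\prod_{s=1}^{\mu_i}t_{k_i+s}$ and the powers of $t^2\mu^{-1}$ are cycle-invariant), so it suffices to impose (\ref{e316}) once per cycle, which is exactly how it is stated (for all $i$ with $\mu_i>1$, but these collapse to one condition per long cycle).

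The one genuine subtlety I would flag — and the place I expect to spend the most care — is the passage from ``$\det C = 0$'' to ``the $i$-th row system has a nontrivial solution'' done \emph{row block by row block}, together with the bookkeeping that the variables $x_{\mu(i)k}$ appearing in the $i$-th row equations (\ref{e33}) are really variables from a \emph{different} row when $\mu(i)\ne i$. In the $\mu_i>1$ case the rows in a single cycle are coupled to each other but not to rows outside the cycle, so the correct object is $D_i$ (size $n\mu_i\times n\mu_i$, governing all $\mu_i$ coupled rows simultaneously), and one must check that $\det D_i=0$ is still the right criterion for the existence of a nontrivial solution on that cycle's worth of rows — this is immediate once we recall $D_i$ is the coefficient matrix of exactly those equations. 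The row-reduction computations of $\det D_{ij}$, $m_{ij}$ and $R_i$ themselves are routine expansions along the last row/column (the cyclic structure in (\ref{e38})--(\ref{e39}) makes them a telescoping product plus one ``wrap-around'' term), so I would state them and defer the arithmetic. Finally I would note that ``necessary and sufficient'' follows because $(x_{jk})$ being invertible forces every one of its rows to be nonzero, and conversely nothing in the argument required more than one nonzero entry per row — the determinant condition $\det(x_{jk})\ne 0$ is handled afterwards, not here, so the theorem is precisely the characterization of when the row-wise solution space is nonzero.
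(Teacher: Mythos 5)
Your proposal follows the paper's own proof essentially verbatim: both reduce the system to the block-diagonal form (\ref{e37}), use the row reductions (\ref{e311})--(\ref{e314}) to factor each $\det D_i$, and discard the nonzero powers of $t_i$ to arrive at (\ref{e315}) and (\ref{e316}), treating the cases $\mu_i=1$ and $\mu_i>1$ separately. The only point the paper states explicitly that you gloss as ``immediate'' is the sufficiency step for a long cycle --- a nontrivial solution of $R_i x_{k_i+\mu_i}^T=0$ propagates backwards through (\ref{e313}), via the invertible matrix $t^2\mu^{-1}$ and the nonzero scalars $t_{k_i+s}$, to nonzero $x_{k_i+s}$ for every $s$ --- but that is a one-line observation, not a different method.
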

\begin{proof} Use (\ref{e37}) to rewrite the linear system (\ref{e35}) as
\be
\begin{pmatrix}
         D_1 & {} & {}\\
         {}    &\ddots & {}\\
         {}   & {}  & D_r
      \end{pmatrix}
      \begin{pmatrix}
        x_1^T \\ \vdots \\ x_n^T
       \end{pmatrix} = 0.
\ee
In order for this system to have nontrivial solutions for {\it all} row vectors $x_i$, it is necessary that $\det(D_i) = 0, 1\le i\le r$. For those $i$ such that $\mu_i = 1$, by (\ref{e311}) and (\ref{e312})
\be
\det(D_i) &=& \det(t_iI_n-t^2\mu^{-1}) =\det(D_{i1})\cdots\det(D_{ir})\\
  {}  &=& t_i^a\prod_{j=1}^{r'}(t_i-t_j^2)\prod_{j=r'}^r(t_i^{\mu_j} - \prod_{s=1}^{\mu_j}t^2_{k_i+s}) \;\mbox{for some $a$}. 
\ee
Since $t_i\ne 0$, in these cases, $\det(D_i) = 0$ is equivalent to (\ref{e315}). Similarly, for those $i$ such that $\mu_i>1$, by (\ref{e313}) and (\ref{e314}), $\det(D_i) = 0$ is equivalent to (\ref{e316}). 
\par
These conditions are also sufficient. This is clear if $\mu_i =1$, since in this case, the linear system involves $x_i$ is $D_ix_i^T = 0$. As for $\mu_i>1$, we note that by (\ref{e313}), a nontrivial solution of $R_ix^T_{k_i+\mu_i} = 0$ will lead to nontrivial solutions for $x_{k_i+s}$ for all $s = 1, ..., \mu_i-1$.
\end{proof}
\par
\begin{example} We continue on the two examples in Example 3.1. 
\par
For the example where $n=3$ and $\mu =\{0<1\le 1\le 1\}$, the conditions in Theorem 3.1 are
\be
(t_i - t_1^2)(t_i - t_2^2)(t_i - t_3^2) = 0,\; i = 1, 2, 3.
\ee
\par
For the example where $n=4$ and  $\mu = \{0<1<3\}$, the conditions are
\be
(t_1-t_1^2)(t_1^3-t_2^2t_3^2t_4^2) &=& 0,\\
(t_2t_3t_4-t_1^6)(t_2t_3t_4 - t_2^2t_3^2t_4^2)^3 &=& 0.
\ee
Since $t_i\ne 0, 1\le i\le 4$, the above conditions are equivalent to
\be
(1-t_1)(t_1^3-t_2^2t_3^2t_4^2) &=& 0,\\
(t_2t_3t_4-t_1^6)(1 - t_2t_3t_4) &=& 0.
\ee
\end{example}
\par\medskip
For a given $\mu$, we first use Theorem 3.1 to select the set of $t$ that satisfies (\ref{e315}) and (\ref{e316}), since any other $t$ will lead to $b(\mu,t)=0$. For each of these selected $t$, we count the solutions $x_i, 1\le i\le n$, of (\ref{e35}) such that $\det(x_{ij})_{n\times n}\ne 0$. The number of these solutions is $b(\mu, t)$. Since the approach developed here involves the determination of nonzero determinants, which leads to high order multivariable polynomials equation systems in general, in the sections that follow, we restrict our attention to the special cases of dimensions $2$, $3$, and $4$. In all these cases, nice formulas for the isomorphism classes of idempotent evolution algebras can be obtained over any finite field.
\section{The case $n=2$}
In this section we consider the case $n=2$. There are two partitions for $2$: $\{1,1\}$ and $\{2\}$. In both cases, $d(\mu) = 2$.
\par\medskip
\subsection{The case $\mu = \{1,1\}$} 
\par
As a matrix, $\mu = I_2$, and $t^2\mu^{-1} = t^2 = \mbox{diag}(t_1^2,t_2^2)$. The conditions imposed on $t$ by Theorem 3.1 (only (\ref{e315}) applies) are
\bea\label{e41}
(t_1 -1)(t_1-t_2^2) = 0\quad\mbox{and}\quad (t_2 -1)(t_2-t_1^2) = 0.
\eea
It follows that if one of $t_1$ and $t_2$ is $1$, then the other must also be $1$ in order for both equations in (\ref{e41}) to hold. If both $t_1, t_2 \ne 1$, then 
\be
t_1-t_2^2 = 0 = t_2-t_1^2\Rightarrow t_1 = t_1^4 
\ee
$\Rightarrow$ both $t_1, t_2$ are primitive roots of $x^3 - 1$ and $t_1^2 = t_2$. In order for $x^3 - 1$ to have $3$ distinct roots in $\mathbb{F}_q$, it is necessary and sufficient that $3|(q-1)$. Thus there are two cases for the partition $\{1,1\}$.
\par\medskip
\begin{enumerate}
\item $t = (1,1)$, $\mu t = I_2$, then $b(\mu, t) = |GL_2(\mathbb{F}_q)| = (q^2-1)(q^2-q)$.
\par\medskip
\item $t=(t_1,t_2)$, $t_1, t_2$ are primitive roots of $x^3-1$ and $t_1^2 = t_2$. This happens if and only if $3|(q-1)$. There are two choices for $t_1$, but we only need to consider one by symmetry. So consider $C =\mbox{diag}(D_1,D_2)$, where (see (\ref{e37})--(\ref{e39}))
\be
\qquad D_1 &= & \begin{pmatrix} D_{11} & 0 \\ 0 & D_{12}\end{pmatrix}  = \begin{pmatrix} t_1 - t_1^2 & 0\\ 0 & t_1 - t_2^2 \end{pmatrix} = \begin{pmatrix} t_1 - t_1^2 & 0\\ 0 & 0 \end{pmatrix},\\
\qquad D_2 &=& \begin{pmatrix} D_{21} & 0 \\ 0 & D_{22}\end{pmatrix}  = \begin{pmatrix} t_2 - t_1^2 & 0\\ 0 & t_2 - t_2^2 \end{pmatrix} = \begin{pmatrix} 0 & 0 \\ 0 & t_2^2 - t_2 \end{pmatrix}.
\ee
Thus, the solutions (see (\ref{e35}) and (\ref{e37})) are $x_1 = (0, a), a\ne 0$ and $x_2 = (b, 0), b\ne 0$. Since $\det(x_1^T,x_2^T)\ne 0$ for these solutions, $b(\mu, t) = (q-1)^2$ for each of the two $t$'s. 
\end{enumerate}
To summarize, the contribution of $\mu=\{1,1\}$ to the sum of (\ref{e26}) (defined by (\ref{e25})) is (we factor out the powers of $q-1$ since we will divide the final sum by $(q-1)^2$)
\be
B(\{1,1\}) = \begin{cases} \frac{(q-1)^2}{2}(q^2+q+2), & \mbox{if $3|(q-1)$};\\
            \frac{(q-1)^2}{2}(q^2+q), & \mbox{if $3\notdivides (q-1)$}.
            \end{cases}
\ee
\par\medskip
\subsection{The case $\mu = \{2\}$}
\par\medskip
 In this case, we have 
\bea\label{e42}
t^2\mu^{-1} = \begin{pmatrix} 0& t_1^2\\ t_2^2 & 0 \end{pmatrix},\;
(t^2\mu^{-1})^2 = \begin{pmatrix} t_1^2t_2^2 & 0 \\ 0 & t_1^2t_2^2 \end{pmatrix}.
\eea
Thus (\ref{e316}) gives
\bea\label{e43}
\det(t_1t_2I_2 - (t^2\mu^{-1})^2) = 0\;\Longleftrightarrow t_1t_2 = 1.
\eea
Thus, there are $q-1$ choices of  $t = (t_1,t_2)$ that provide nontrivial solutions to the linear system (\ref{e35}).
\par
The matrix $C$ defined by (\ref{e37}) in this case is (see (\ref{e38}))
{\small \bea\label{e44}
\qquad\begin{pmatrix}t_1I_2 & -t^2\mu^{-1}\\ -t^2\mu^{-1} & t_2I_2 \end{pmatrix},\;\mbox{row reduction $\longrightarrow$}\;
\begin{pmatrix}t_1I_2 & -t^2\mu^{-1}\\ 0 & 0 \end{pmatrix}.
\eea}
The solution $x_2^T$ in (\ref{e35}) needs to be a nonzero vector of $\mathbb{F}^2_q$, and there are a total of $q^2 -1$ of them. Write $x_2 = (x_{21}, x_{22})$, then since $t_1t_2 = 1$, 
\bea\label{e45}
x_1^T = t_1^{-1}t^2\mu^{-1}x_2^T = (t_1x_{22}, t_1^{-1}t_2^2x_{21})^T = (t_2^{-1}x_{22}, t_2^{3}x_{21})^T.
\eea
We need these solutions to satisfy
\bea\label{e46}
\det (x_1^T, x_2^T)\ne 0 \Longleftrightarrow x_{22} \ne \pm t_2^2x_{21}.
\eea
For a given $t$, we discuss the cases according to the characteristic $p$ of the field $\mathbb{F}_q$. 
\par
If $p= 2$, then we only need $x_{22} \ne t_2^2x_{21}$, and the total of the pairs $x_1, x_2$ such that $\det (x_1^T, x_2^T)\ne 0$ is $b(\mu,t) = q(q-1)$. 
\par
If $p\ne 2$, then  
\be
b(\mu,t)= (q-1)(q-2)+q-1=q(q-2)+1 =(q-1)^2,.
\ee 
This is because for $x_{21} \ne 0$, there are $q-2$ choices of $x_{22}$; and for $x_{21} = 0$, there are $q-1$. 
\par\medskip
Thus, the contribution of $\mu = \{2\}$ to the sum of (\ref{e26}) is (multiply $b(\mu, t)$ by $q-1$, the number of $t$'s)
\be
B(\{2\}) = \begin{cases} \frac{1}{2}q(q-1)^2, & \mbox{if $p=2$};\\
                                 \frac{1}{2}(q-1)^3, & \mbox{if $p\ne 2$}.
          \end{cases}
\ee
\par\medskip
Finally, the number of orbits $|X/G|$ in the case $n=2$ is then given by the formula
\be
|X/G| = \frac{1}{(q-1)^2}(B(\{1,1\})+B(\{2\})).
\ee
For example, if $p = 2$ and $3|(q-1)$, then 
\be
|X/G| &=& \frac{1}{(q-1)^2} \left(\frac{(q-1)^2}{2}(q^2+q+2)+ \frac{1}{2}q(q-1)^2\right)\\
 {} &=& \frac{1}{2}((q+1)^2 +1).
\ee
\par\medskip
We summarize the results in the following theorem.
\begin{theorem}
The number $\mathcal{N}(2,\mathbb{F}_q)$ of isomorphism classes of $2$-dimensional idempotent evolution algebras over a finite field $\mathbb{F}_q$, where $q=p^m$, is given by the table below:
\begin{table}[h]
\centering 
\begin{tabular}{c|c|c} 
{} & $3|(q-1)$ & $3\notdivides (q-1)$\\
\hline
{} & {} & {}\\
$p=2$ & $\frac{(q+1)^2}{2}+\frac{1}{2}$ & $\frac{(q+1)^2}{2}-\frac{1}{2}$\\
\hline
{} & {} & {}\\
$p\ne 2$ & $\frac{(q+1)^2}{2}$ & $\frac{(q+1)^2}{2}-1$\\
\hline
\end{tabular}
\label{t1}
\end{table}
\end{theorem}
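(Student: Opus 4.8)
The plan is to assemble the final theorem by combining the two partition-wise computations $B(\{1,1\})$ and $B(\{2\})$ already carried out in Sections 4.1 and 4.2, and then specializing the master formula $\mathcal{N}(2,\mathbb{F}_q) = \frac{1}{(q-1)^2}\bigl(B(\{1,1\})+B(\{2\})\bigr)$ from Theorem \ref{T1} to each of the four cases in the table. So the proof is essentially bookkeeping: there are two independent binary dichotomies, namely whether $3 \mid (q-1)$ (which governs whether the extra fixed points coming from the primitive cube roots of unity appear in $B(\{1,1\})$) and whether $p = 2$ (which governs whether the condition $x_{22} \neq \pm t_2^2 x_{21}$ in (\ref{e46}) degenerates to a single condition or remains two distinct conditions, hence whether $b(\{2\},t)$ equals $q(q-1)$ or $(q-1)^2$). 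These four combinations give four entries.

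The key steps, in order, are: (i) recall $B(\{1,1\}) = \frac{(q-1)^2}{2}(q^2+q+2)$ when $3\mid(q-1)$ and $\frac{(q-1)^2}{2}(q^2+q)$ otherwise; (ii) recall $B(\{2\}) = \frac{1}{2}q(q-1)^2$ when $p=2$ and $\frac{1}{2}(q-1)^3$ when $p\neq 2$; (iii) substitute into $\mathcal{N}(2,\mathbb{F}_q) = \frac{1}{(q-1)^2}\bigl(B(\{1,1\})+B(\{2\})\bigr)$, cancel the common factor $(q-1)^2$, and simplify. For instance when $p=2$ and $3\mid(q-1)$ one gets $\frac{1}{2}(q^2+q+2) + \frac{1}{2}q = \frac{1}{2}(q^2+2q+2) = \frac{(q+1)^2}{2} + \frac{1}{2}$; when $p=2$ and $3\nmid(q-1)$ one gets $\frac{1}{2}(q^2+q) + \frac{1}{2}q = \frac{1}{2}(q^2+2q) = \frac{(q+1)^2}{2} - \frac{1}{2}$; when $p\neq 2$ and $3\mid(q-1)$ one gets $\frac{1}{2}(q^2+q+2) + \frac{1}{2}(q-1) = \frac{1}{2}(q^2+2q+1) = \frac{(q+1)^2}{2}$; and when $p\neq 2$ and $3\nmid(q-1)$ one gets $\frac{1}{2}(q^2+q) + \frac{1}{2}(q-1) = \frac{1}{2}(q^2+2q-1) = \frac{(q+1)^2}{2} - 1$. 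Each of these matches a cell of the table, completing the proof.

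There is no serious obstacle: the substantive content — the case analysis on $t \in T_2$ forced by (\ref{e315}) and (\ref{e316}), the counting of solution matrices with nonzero determinant, and the separate treatment of characteristic $2$ — has already been dispatched in Sections 4.1 and 4.2. The only thing that deserves a word of care is making sure the two dichotomies are genuinely independent and exhaustive: the condition $3\mid(q-1)$ is purely about the multiplicative order structure of $\mathbb{F}_q^\times$ and affects only the $\{1,1\}$ contribution, while the condition $p=2$ affects only the $\{2\}$ contribution, so the four cases can be read off by multiplying the appropriate expressions, and the arithmetic simplification to the closed form $\frac{(q+1)^2}{2} \pm \text{(small constant)}$ is routine polynomial algebra. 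I would therefore present the proof as a short paragraph recording the four substitutions and the resulting simplifications, referring back to $B(\{1,1\})$ and $B(\{2\})$ and to Theorem \ref{T1}.
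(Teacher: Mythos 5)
Your proposal is correct and is essentially the paper's own argument: the paper proves this theorem by exactly the same bookkeeping, substituting the values of $B(\{1,1\})$ and $B(\{2\})$ from Sections 4.1 and 4.2 into $\mathcal{N}(2,\mathbb{F}_q)=\frac{1}{(q-1)^2}\bigl(B(\{1,1\})+B(\{2\})\bigr)$ and simplifying case by case. Your four simplifications all check out and match the table.
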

\section{The case $n=3$}
There are $3$ partitions of the integer $3$: $\{1,1,1\}, \{1,2\}$ and $\{3\}$.
\par
\subsection {The case $\mu=\{1,1,1\}$}
\par\medskip
 Given $t =(t_1,t_2,t_3)$ such that $t_1t_2t_3\ne 0$, the matrix in (\ref{e37}) is: 
 \be
 C &=& \mbox{diag}(D_1,D_2,D_3),\;\mbox{where}\\
D_i &=& \mbox{diag}(t_i-t_1^2, t_i-t_2^2, t_i - t_3^2),\;1\le i\le 3.
\ee
The conditions for all $D_ix_i^T = 0, 1\le i\le 3$, to have nontrivial solutions are (c.f. Example 3.2):
\be
(t_i - t_1^2)(t_i - t_2^2)(t_i - t_3^2) = 0,\; i = 1, 2, 3.
\ee
Note that for each $i$, one of the terms in the above equation is $t_i - t_i^2$, which cannot be $0$ unless $t_i=1$. Thus we separate the cases according to whether each of the $t_i$'s is $1$ or not. This leads to the cases listed below for further consideration, for all other choices of $t$, $b(\mu,t) = 0$ by Theorem 3.1.
\par\medskip
(5.1a). All $t_i = 1$. Note that any two $t_i=1$ implies all $t_i=1$.
\par
(5.1b). One $t_i = 1$, the other two are not equal and are both primitive roots of $x^3 - 1$. This happens if and only if $3|(q-1)$. If that is the case, there are $6$ of these $t$.
\par
(5.1c). Assume that $3|(q-1)$, and let $\xi_1, \xi_2$ be the two primitive roots of $x^3 - 1$. Any of the $6$ distinct permutations of $(\xi_1,\xi_1,\xi_2)$ and $(\xi_2,\xi_2,\xi_1)$.
\par\medskip
(5.1d). If $7|(q-1)$, then there are the following choices for $t$. All $t_i$ are primitive roots of $x^7 - 1$ such that they satisfy one the following conditions:
\bea\label{e51}
t_1^2 = t_2, t_2^2 = t_3, t_3^2 = t_1;\quad \mbox{or}\quad t_1^2 = t_3, t_2^2 = t_1, t_3^2 = t_2.
\eea
For each of the conditions in (\ref{e51}), let $t_1$ run through the primitive roots of $x^7 - 1$, we obtain $6$ choices for $t$. Thus there are a total of $12$ of these $t$.
\par\medskip
We now compute the number $b(\mu,t)$ for the cases listed above. The corresponding permutation matrix is $I_3$. 
\par\medskip
{\bf Case (5.1a).} All $t_i = 1$. Then $\mu t  = I_3$ and 
\be
b(\mu, t) = |X| = (q^3-1)(q^3-q)(q^3-q^2).
\ee
\par\medskip
{\bf Case (5.1b).} Assume $3|(q-1)$. By symmetry, we only need to consider the case $t_1 =1, t_2 = \xi_1, t_3 = \xi_2$. Then 
\be
D_1 = \mbox{diag}(0, 1-t_2^2,1-t_3^2) \Longrightarrow x_1 = (x_{11},0,0), x_{11}\ne 0,\\
D_2 = \mbox{diag}(t_2 - 1, t_2-t_2^2, 0) \Longrightarrow x_2 = (0,0,x_{23}), x_{23}\ne 0,\\
D_3 = \mbox{diag}(t_3 - 1, 0,t_3-t_3^2) \Longrightarrow x_3 = (0,x_{32},0), x_{32}\ne 0.
\ee 
For all these solutions, $\det(x_1^T,x_2^T,x_3^T)\ne 0$, thus $b(\mu,t) = (q-1)^3$ for any of these $6$ choices of $t$.
\par\medskip
{\bf Case (5.1c).} Assume $3|(q-1)$. By symmetry, we only need to consider the case $t_1 =\xi_1 =t_2, t_3 = \xi_2 = t_1^2$. We have
\be
D_1 &=& \mbox{diag}(t_1-t_1^2, t_1-t_1^2,0) \Longrightarrow x_1 = (0,0,x_{13}), x_{13}\ne 0,\\
D_2 &=& \mbox{diag}(t_1 - t_1^2, t_1-t_1^2, 0) \Longrightarrow x_2 = (0,0,x_{23}), x_{23}\ne 0,\\
D_3 &=& \mbox{diag}(0, 0,t_1^2-t_1) \Longrightarrow x_3 = (x_{31}, x_{32}, 0) \ne 0.
\ee 
Since $x_1$ and $x_2$ are dependent, $b(\mu,t)=0$ for these $t$.
\par\medskip
{\bf Case (5.1d).} Assume $7|(q-1)$. Similar to the discussions in (5.1b) above, we find that $b(\mu,t) = (q-1)^3$ for any of these $12$ choices of $t$.
\par\medskip
Since for $\mu = \{1,1,1\}$, $d(\mu)=3!=6$, the contribution of this $\mu$ to the sum in (\ref{e26}) is:
{\small 
\bea\label{e52}
\qquad B(\{1,1,1\}) = \begin{cases} |X|/6, &\mbox{if $3\notdivides (q-1), 7\notdivides(q-1)$};\\
                                 |X|/6+(q-1)^3, &\mbox{if $3|(q-1), 7\notdivides (q-1)$};\\
                                 |X|/6+2(q-1)^3, &\mbox{if $3\notdivides (q-1), 7|(q-1)$};\\
                                 |X|/6+3(q-1)^3, &\mbox{if $3|(q-1), 7|(q-1)$}.
           \end{cases}
\eea}
\par\medskip
\subsection{The case $\mu = \{1,2\}$}
\par\medskip
 We have
{\small
\be
\mu = \begin{pmatrix}1 & {} & {} \\ {} & 0 & 1\\ {} & 1 & 0 \end{pmatrix} &=& \mu^{-1},\quad
t^2\mu^{-1} = \begin{pmatrix}t_1^2 & {} & {} \\ {} & 0 & t_2^2\\ {} & t_3^2 & 0 \end{pmatrix},\\
(t^2\mu^{-1})^2 &=& \begin{pmatrix}t_1^4 & {} & {} \\ {} & t_2^2t_3^2 & {}\\ {} & {} & t_2^2t_3^2 \end{pmatrix}.
\ee}
Thus the two equations given by (\ref{e315}) and (\ref{e316}) are
\be
{} & {} & (t_1-t_1^2)(t_1^2 - t_2^2t_3^2) = 0 \;\mbox{and}\; \det(t_2t_3I_3 - (t^2\mu^{-1})^2) = 0\\
{} &\Leftrightarrow &   (1-t_1)(t_1^2 - t_2^2t_3^2) = 0 \;\mbox{and}\; (t_2t_3-t_1^4)(1-t_2t_3) = 0.
\ee
These equations lead to the cases listed below for further consideration, for all other $t$, $b(\mu, t)=0$:
\par\medskip
(5.2a). $t_1=1$ and $t_2t_3 = 1$. There are $q-1$ of these $t$.
\par
(5.2b). $p\ne 2$, $t_1=-1$ and $t_2t_3 = 1$.
\par
(5.2c). $3|(q-1)$, $t_1$ is a primitive root of $x^3-1$ and $t_2t_3 = t_1$.
\par
(5.2d). $p\ne 2$, $t_1\ne -1$, $t_1^3 = -1$, and $t_2t_3 = -t_1$.
\par\medskip
We will see that for (5.2b), (5.2c) and (5.2d), $b(\mu,t)=0$, so there is no need to count these $t$. The matrix $C$ of (\ref{e37}) is $C =\mbox{diag}(D_1, D_2)$, where
{\small
\bea\label{e53}
D_1 &=&  t_1I_3 - t^2\mu^{-1} = \begin{pmatrix}t_1-t_1^2 & {} & {}\\ {} & t_1 & -t_2^2 \\ {} & -t_3^2 & t_1\end{pmatrix},\\ \nonumber
& &\\ \nonumber
D_2 &=& \begin{pmatrix}t_2I_3 & -t^2\mu^{-1}\\ -t^2\mu^{-1} & t_3I_3\end{pmatrix}.\nonumber
\eea}
They can be row reduced to
{\small 
\bea\label{e54}
D_1 &\rightarrow& D_1'=\begin{pmatrix}1-t_1 & {} & {}\\ {} & t_1 & -t_2^2 \\ {} & 0 & t_1^2-t_2^2t_3^2\end{pmatrix},\\\nonumber
&&\\\nonumber
D_2 &\rightarrow& D_2'=\begin{pmatrix}t_2I_3 & -t^2\mu^{-1}\\ 0 & t_2t_3I_3-(t^2\mu^{-1})^2\end{pmatrix}.\nonumber
\eea}
Since $t_1t_2t_3\ne 0$, we can perform further reduction 
\par\medskip
{\small 
\bea\label{e55}
\quad t_2t_3I_3-(t^2\mu^{-1})^2 \rightarrow \mbox{diag}(t_2t_3 - t_1^4, 1-t_2t_3, 1-t_2t_3) =: D_3.
\eea}
\par\medskip
{\bf Case (5.2a).} For $t_1=t_2t_3 = 1$, $D_3 = 0$. Thus $x_3 = (x_{31}, x_{32}, x_{33})\ne 0$ is arbitrary, and (use $D_2'$) 
\be
x_2^T = t_2^{-1}t^2\mu^{-1}x_3^T = t_2^{-1}(t_1^2x_{31}, t_2^2x_{33}, t_3^2x_{32})^T.
\ee
Up to a nonzero scalar multiple, we can just assume $x_2 = (x_{31}, t_2^2x_{33}, t_2^{-2}x_{32})$. We also have
{\small
\be
D_1' = \begin{pmatrix}0 & {} & {}\\ {} & 1 & -t_2^2 \\ {} & {} & 0\end{pmatrix},
\ee}
so $x_1 = (x_{11}, t_2^2x_{13}, x_{13})$ with $x_{11}$ and $x_{13}$ not both $0$. We need the following matrices to be nonsingular (multiply column $3$ by $t_2^2$ then add the negative of row $2$ to row $3$ in the reduction steps)
{\footnotesize
\be
\begin{pmatrix}x_{11} & t_2^2x_{13} & x_{13}\\ x_{31} & t_2^2x_{33} &t_2^{-2}x_{32}\\
                              x_{31} & x_{32} & x_{33}\end{pmatrix} \rightarrow 
           \begin{pmatrix}x_{11} & t_2^2x_{13} & t_2^2x_{13}\\ x_{31} & t_2^2x_{33} &x_{32}\\
                              x_{31} & x_{32} & t_2^2x_{33}\end{pmatrix} \rightarrow 
      \begin{pmatrix}x_{11} & t_2^2x_{13} & t_2^2x_{13}\\ x_{31} & t_2^2x_{33} &x_{32}\\
                              0 & x_{32} -t_2^{2}x_{33} & t_2^2x_{33}-x_{32}\end{pmatrix}.
\ee}
So $x_{32} -t_2^{2}x_{33} \ne 0$. Under this assumption, the matrix can be further reduced to (add column $3$ to column $2$)
{\small
\bea\label{e56}
 \begin{pmatrix}x_{11} & t_2^2x_{13} & t_2^2x_{13}\\ x_{31} & t_2^2x_{33} &x_{32}\\
                              0 & 1 & -1\end{pmatrix}\rightarrow
    \begin{pmatrix}x_{11} & 2t_2^2x_{13} & t_2^2x_{13}\\ x_{31} & t_2^2x_{33} +x_{32} &x_{32}\\
                              0 & 0 & -1\end{pmatrix}.
\eea}
Thus the conditions on the $5$ parameters $x_{11}, x_{13}, x_{31}, x_{32}, x_{33}$ are: 
\be
x_{32}- t_2^2x_{33}\ne 0\; \mbox{and}\; x_{11}(t_2^2x_{33} +x_{32})- 2t_2^2x_{13}x_{31}\ne 0. 
\ee
The second condition is equivalent to 
\be
u :=(x_{31}, t_2^2x_{33} +x_{32})\ne 0 \;\mbox{ and $(x_{11},2t_2^2x_{13})$ is not a multiple of $u$}. 
\ee
\par
If $p=2$, then $2t_2^2x_{13}x_{31}=2t_2^2x_{13}=0$, and the conditions reduce to $x_{11}\ne 0$ and $x_{32}\ne t_2^2x_{33}$. The total number of $x_{11}, x_{13}, x_{31}, x_{32}, x_{33}$ that satisfy these two conditions is $q^3(q-1)^2$.
\par
Assume $p\ne 2$. The number of $x_3$ such that $x_{32}\ne t_2^2x_{33}$ is $q^2(q-1)$. Among these $x_3$'s, there are $q-1$ make $u=0$. This is because if $u=0$, then $x_{31}=0$, and so $x_{33}\ne 0$; otherwise, $t_2^2x_{33}+x_{32}=0 \Rightarrow x_{32} = 0$, contradicts $x_{32}\ne t_2^2x_{33}$. Thus the number of $x_3$ such that $x_{32}\ne t_2^2x_{33}$ and $u\ne 0$ is 
\be
q^2(q-1)-(q-1) = (q-1)^2(q+1).
\ee
 The number of $(x_{11}, 2t_2^2x_{13})$ that are not multiples of a given $u$ is $q^2-q=q(q-1)$. Thus, the total number of $x_{11}, x_{13}, x_{31}, x_{32}, x_{33}$ that satisfy the conditions is 
 \be
(q-1)^2(q+1)\cdot q(q-1)=q(q-1)^3(q+1).
\ee
\par
Thus for each given $t$ such that $t_1=t_2t_3 = 1$, 
\bea\label{ea1}
b(\mu, t) = \begin{cases} q^3(q-1)^2, & \mbox{if $p=2$};\\
                q(q-1)^3(q+1),& \mbox{if $p\ne 2$}.
                \end{cases}
\eea
\par\medskip
{\bf Case (5.2b).} $p\ne 2$, $t_1=-1$ and $t_2t_3 = 1$. In this case, the matrices $D_1'$ and $D_2'$ in (\ref{e54}) are
\be
D_1'=\begin{pmatrix}2 & {} & {}\\ {} & -1 & -t_2^2 \\ {} & 0 & 0\end{pmatrix}_{3\times 3},\;
D_2'=\begin{pmatrix}t_2I_3 & -t^2\mu^{-1}\\ 0 & 0\end{pmatrix}_{6\times 6}.
\ee 
Thus $x_1=(0, -t_2^2x_{13}, x_{13})\ne 0$, $x_3=(x_{31},x_{32},x_{33})\ne 0$ arbitrary, and 
\be 
x_2^T=t_2^{-1} t^2\mu^{-1}x_3^T&=& (t_2^{-1}t_1^2x_{31},t_2x_{33}, t_2^{-1}t_3^2x_{32})^T\\
&\rightarrow& 
(x_{31},t_2^2x_{33}, t_2^{-2}x_{32})^T. 
\ee
So there are $4$ parameters $x_{13}, x_{31}, x_{32}, x_{33}$ and the matrix that needs to be nonsingular is (in the reduction, first add a multiple of column $3$ to column $2$, then add a multiple of row $1$ to row $2$)
{\small
\be
\begin{pmatrix} x_{31} & x_{32} & x_{33}\\ x_{31} & t_2^2x_{33} & t_2^{-2}x_{32}\\
                        0 &  -t_2^2x_{13} & x_{13} \end{pmatrix} &\rightarrow& 
 \begin{pmatrix} x_{31} & x_{32}+t_2^2x_{33} & x_{33}\\ x_{31} & t_2^2x_{33}+x_{32} & t_2^{-2}x_{32}\\
                        0 &  0 & x_{13} \end{pmatrix}\\
                        &\rightarrow&
\begin{pmatrix} x_{31} & x_{32} & x_{33}\\ 0 & 0 & t_2^{-2}x_{32}-x_{33}\\
                        0 &  0 & x_{13} \end{pmatrix},
\ee}
which cannot have rank $3$. Thus there is no fixed point for these $t$. 
\par\medskip
{\bf Cases (5.2c) and (5.2d).} Suppose that $t_1$ is a primitive root of $x^3 - 1$ and $t_2t_3 = t_1$. Then
\be
D_1'=\begin{pmatrix}1-t_1 & {} & {}\\ {} & -1 & -t_2^2 \\ {} & 0 & 0\end{pmatrix},\;
D_2'=\begin{pmatrix}t_2I_3 & -t^2\mu^{-1}\\ 0 & D_3\end{pmatrix},
\ee
where (see (\ref{e55})):
\be 
D_3 = \mbox{diag}(t_2t_3 - t_1^4, 1-t_2t_3, 1-t_2t_3) =  \mbox{diag}(0, 1-t_2t_3, 1-t_2t_3).
\ee 
Thus, we have $x_1 = (0, t_2^2x_{13}, x_{13}), x_3 = (x_{31}, 0, 0)$, and $x_2= (t_2^{-1}t_1^2x_{31}, 0, 0)$. Since $x_2$ and $x_3$ are dependent, $b(\mu, t) = 0$ for these $t$. Similarly $b(\mu, t) = 0$ in the case (5.2d). 
\par\medskip
For $\mu = \{1,2\}$, $d(\mu)=2$, so its contribution $B(\mu)$ to the sum in (\ref{e26}) is (multiply (\ref{ea1}) by $q-1$, the number of $t$): 
\bea\label{e57}
B(\{1,2\})=\begin{cases} q^3(q-1)^3/2,&\mbox{ if $p=2$},\\ 
                               q(q-1)^4(q+1)/2, &\mbox{ if $p\ne 2$}.
\end{cases}
\eea
\par\medskip
\subsection{The case $\mu = \{3\}$} We have
\par\medskip
{\footnotesize
\be
\mu^{-1} = \begin{pmatrix}{} & 1 & {} \\ {} & {} & 1\\ 1 & {} & {} \end{pmatrix}, \;
t^2\mu^{-1} = \begin{pmatrix}{} & t_1^2 & {}  \\ {} & {} & t_2^2 \\ t_3^2 & {} & {} \end{pmatrix},\;
(t^2\mu^{-1})^3 = (t_1t_2t_3)^2I_3.
\ee}
\par\medskip
Condition (\ref{e316}) in Theorem 3.1 is 
\be
\det(t_1t_2t_3I_3 -(t_1t_2t_3)^2I_3) = 0\Leftrightarrow t_1t_2t_3 = 1.
\ee 
Under this condition, the matrix $C = (D_1)$ in (\ref{e37}) reduces to (see (\ref{e313}))
\be
D_1' = \begin{pmatrix}t_1I_3 & -t^2\mu^{-1} & {} \\ {} & t_2I_3 & -t^2\mu^{-1} \\ {} & {} & 0\end{pmatrix}.
\ee
Thus $x'_3 = (x_{31}, x_{32}, x_{33})\ne 0$ is arbitrary and 
\be
x'_2 &=& t_2^{-1}(t_1^2x_{32}, t_2^2x_{33}, t_3^2x_{31}),\\ 
x'_1 &=& t_1^{-1}t_2^{-1}(t_1^2t_2^2x_{33}, t_2^2t_3^2x_{31}, t_1^2t_3^2x_{32}).
\ee
We need the following matrix 
{\small
\be
((x'_3)^T,(t_2x'_2)^T,(t_1t_2x'_1)^T) = \begin{pmatrix}x_{31} & t_1^2x_{32} & t_1^2t_2^2x_{33}\\
                                                             x_{32} & t_2^2x_{33} & t_2^2t_3^2x_{31}\\
                                                             x_{33} & t_3^2x_{31} & t_1^2t_2^3x_{32} \end{pmatrix}
\ee}
to be nonsingular. Since $t_1t_2t_3 = 1$, multiply the second row by $t_1^2$ and multiply the third row by $t_1^2t_2^2$, we obtain the following circulant matrix \cite[Def.~1]{Kra}:
{\small
\bea\label{e58}
C'=(x_3^T,x_2^T,x_1^T) := \begin{pmatrix}x_{31} & t_1^2x_{32} & t_1^2t_2^2x_{33}\\
                                                             t_1^2x_{32} & t_1^2t_2^2x_{33} & x_{31}\\
                                                             t_1^2t_2^2x_{33} & x_{31} & t_1^2x_{32} \end{pmatrix}.
\eea}
The polynomial (in the indeterminate $y$) representer for C’ is \cite[Def.~3]{Kra} 
\be
f(y)=x_{31} + t_1^2x_{32}y+ t_1^2t_2^2x_{33}y^2.
\ee 
It is known that $C'$ is nonsingular if and only if $\mbox{gcd}(f(y), y^3-1) =1$ \cite[Cor.~10]{Kra} (\cite{Kra} deals with complex numbers only, but the same conclusion holds for an arbitrary finite field \cite[Cor.~1]{Day} ). Since $y^3-1 = (y-1)(y^2+y+1)$, $\mbox{gcd}(f(y), y^3-1) =1$ if and only if 
{\small
\bea\label{e59}
\qquad f(1)=x_{31} + t_1^2x_{32}+ t_1^2t_2^2x_{33} \ne 0\;\mbox{and}\; 
\mbox{gcd}(f(y), y^2+y+1) = 1. 
\eea}
We consider whether or not $y^2+y+1$ is irreducible in $\mathbb{F}_q$.
\par\medskip
Assume that $y^2+y+1$ is reducible. If $p = 3$, $y^3-1 = (y-1)^3$, then we only need $f(1)\ne 0$. In this case, there are  $q^2(q-1)$ of $x_3$ such that $C'$ is nonsingular. If $p\ne 3$, $y^3-1$ has $3$ distinct roots in $\mathbb{F}_q$ by assumption, which implies $3|(q-1)$. Let $\xi$ be a primitive root of $y^3-1$, then we need $f(1)\ne 0$, $f(\xi)\ne 0$, and $f(\xi^2)\ne 0$. These conditions lead to the fact that
{\small
\be
\begin{pmatrix} 1 & 1 & 1\\ 1 & \xi & \xi^2\\ 1 & \xi^2 & \xi \end{pmatrix}
\begin{pmatrix} x_{31} \\ t_1^2x_{32} \\ t_1^2t_2^2x_{33} \end{pmatrix} \in (\mathbb{F}_q^{\times})^3,\;\mbox{where $\mathbb{F}^{\times}=\mathbb{F}-\{0\}$}.
\ee}
Thus the set of $x_3$ such that $C'$ is nonsingular has the same number of elements as $(\mathbb{F}_q^{\times})^3$, and there are $(q-1)^3$ of these $x_3$.
\par
Assume that $y^2+y+1$ is irreducible. Then $\mbox{gcd}(f(y), y^2+y+1) = 1$ $\Leftrightarrow$ $f(y)$ is not a multiple of $y^2+y+1\Leftrightarrow$  
\bea\label{e510}
x_{31}=t_1^2x_{32}=t_1^2t_2^2x_{33}
\eea 
do not hold simultaneously. The number of $x_3$ that satisfy the linear system (\ref{e510}) is $q$. Note that if (\ref{e510}) holds, then $f(1) = 0$ if and only if $x_3 = 0$ (since $p\ne 3$). There are $q^2$ of $x_3$ that satisfy $f(1) =0$ including $x_3=0$. Thus when $y^2+y+1$ is irreducible, the number of $x_3$ such that $C'$ is nonsingular is:
\be
q^3 - q - q^2 + 1 = (q-1)^2(q+1).
\ee
\par\medskip
To summarize, for $\mu = \{3\}$, in order to have fixed points, $t_1t_2t_3 = 1$, and there are $(q-1)^2$ of these $t$. If $p=3$, then $b(\mu,t) = q^2(q-1)$; if $3|(q-1)$, then $b(\mu,t) = (q-1)^3$; and if $p\ne 3$ and $3\notdivides (q-1)$, then $b(\mu,t) = (q-1)^2(q+1)$. For this $\mu$, $d(\mu)=3$, so its contribution $B(\mu)$ to the sum in (\ref{e26}) is:  
\bea\label{e511}
\qquad B(\{3\}) = \begin{cases} q^2(q-1)^3/3, &\mbox{if $p=3$};\\
                                  (q-1)^5/3, &\mbox{if $p\ne 3$ and $3|(q-1)$};\\
                                  (q-1)^4(q+1)/3, & \mbox{if $p\ne 3$ and $3\notdivides (q-1)$}.
          \end{cases}
\eea
\par\medskip
Finally, the number of isomorphism classes of $3$-dimensional idempotent evolution algebra over $\mathbb{F}_q$ is computed by 
\be
(B(\{1,1,1\})+B(\{1,2\})+B(\{3\}))/(q-1)^3.
\ee 
For example, if $p=2$, $3|(q-1)$ and $7|(q-1)$, then the number of isomorphism classes is 
{\small
\be
\mathcal{N}(3,\mathbb{F}_q) &=& \frac{1}{(q-1)^3}\left(\frac{|X|}{6} +3(q-1)^3 +\frac{q^3(q-1)^3}{2} +\frac{(q-1)^5}{3} \right)\\
&=& \frac{q^3(q+1)(q^2+q+1)}{6} +3 +\frac{q^3}{2} +\frac{(q-1)^2}{3}.
\ee}
\par\medskip
We summarize the discussions in the following theorem.
\par\medskip 
\begin{theorem}\label{T5} The number of isomorphism classes $\mathcal{N}(3,\mathbb{F}_q)$ of $3$-dimensional idempotent evolution algebras over a finite field $\mathbb{F}_q$, where $q = p^m$, is given by the tables below ($c=q^3(q+1)(q^2+q+1)/6$ in the tables):
\par
{\small
\begin{table}[h]
\centering 
\begin{tabular}{c|c|c} 
$p=2$ & $3|(q-1)$ & $3\notdivides (q-1)$\\
\hline
{} & {} & {}\\
$7|(q-1)$ & $c+3+\frac{q^3}{2}+\frac{(q-1)^2}{3}$ & $c+2+\frac{q^3}{2}+\frac{q^2-1}{3}$\\
\hline
{} & {} & {}\\
$7\notdivides (q-1)$ & $c+1+\frac{q^3}{2}+\frac{(q-1)^2}{3}$ & $c+\frac{q^3}{2}+\frac{q^2-1}{3}$\\
\hline
\end{tabular}
\label{t2}
\end{table}}
\par
{\small
\begin{table}[h]
\centering 
\begin{tabular}{c|c} 
$p=3$ & $(\Rightarrow 3\notdivides (q-1))$\\
\hline
{} & {}\\
$7|(q-1)$ & $c+2+\frac{q(q^2-1)}{2}+\frac{q^2}{3}$\\
\hline
{} & {} \\
$7\notdivides (q-1)$ & $c+\frac{q(q^2-1)}{2}+\frac{q^2}{3}$ \\
\hline
\end{tabular}
\label{t3}
\end{table}}
{\small
\begin{table}[h]
\centering 
\begin{tabular}{c|c|c} 
$p>3$ & $3|(q-1)$ & $3\notdivides (q-1)$\\
\hline
{} & {} & {}\\
$7|(q-1)$ & $c+3+\frac{q(q^2-1)}{2}+\frac{(q-1)^2}{3}$ & $c+2+\frac{q(q^2-1)}{2}+\frac{q^2-1}{3}$ \\
\hline
{} & {} & {}\\
$7\notdivides (q-1)$ & $c+1+\frac{q(q^2-1)}{2}+\frac{(q-1)^2}{3}$  & $c+\frac{q(q^2-1)}{2}+\frac{q^2-1}{3}$ \\
\hline
\end{tabular}
\label{t4}
\end{table}}
\end{theorem}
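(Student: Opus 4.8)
The plan is to assemble Theorem \ref{T5} from the three partition-wise contributions computed in Subsections 5.1--5.3 together with the master formula (\ref{e26}) of Theorem \ref{T1}. Since $\mathcal{P}(3)=\{\{1,1,1\},\{1,2\},\{3\}\}$, formula (\ref{e26}) specializes to
\[
\mathcal{N}(3,\mathbb{F}_q)=\frac{B(\{1,1,1\})+B(\{1,2\})+B(\{3\})}{(q-1)^3},
\]
so the whole proof reduces to adding the expressions (\ref{e52}), (\ref{e57}), (\ref{e511}) and dividing by $(q-1)^3$. The one preliminary simplification to record is that
\[
|X|=(q^3-1)(q^3-q)(q^3-q^2)=q^3(q-1)^3(q+1)(q^2+q+1),
\]
so the term $|X|/6$ appearing in every branch of (\ref{e52}) contributes the common summand $c=q^3(q+1)(q^2+q+1)/6$ to $\mathcal{N}(3,\mathbb{F}_q)$, which is exactly the quantity factored out in the statement.

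Next I would set up the case split. Three independent conditions govern which branch of (\ref{e52}), (\ref{e57}), (\ref{e511}) is in force: the characteristic ($p=2$, $p=3$, or $p>3$), whether $3|(q-1)$, and whether $7|(q-1)$. Here $7|(q-1)$ affects only $B(\{1,1,1\})$ through case (5.1d); $3|(q-1)$ affects $B(\{1,1,1\})$ through case (5.1b) (case (5.1c) contributing nothing) and $B(\{3\})$ through (\ref{e511}); the characteristic affects $B(\{1,2\})$ (the dichotomy $p=2$ versus $p\ne 2$ in (\ref{e57})) and $B(\{3\})$ (the trichotomy $p=3$, $p\ne 3$ with $3|(q-1)$, $p\ne 3$ with $3\notdivides(q-1)$ in (\ref{e511})). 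I would also note the elementary constraints that prune the tables: $p=3$ forces $q\equiv -1\pmod 3$, hence $3\notdivides(q-1)$, so the $p=3$ table has a single relevant column; and $p=7$ forces $7\notdivides(q-1)$, so that combination never occurs.

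Then, for each admissible cell of the three tables, I substitute the relevant branches of (\ref{e52}), (\ref{e57}), (\ref{e511}), divide by $(q-1)^3$, and simplify using the factorization above together with $q(q-1)(q+1)=q(q^2-1)$ and $(q-1)(q+1)=q^2-1$. For instance, in the cell $p>3$, $3|(q-1)$, $7|(q-1)$ one obtains $c+3+q(q^2-1)/2+(q-1)^2/3$; replacing the $B(\{1,2\})$ term by its $p=2$ value, or the $B(\{3\})$ term by its $p=3$ value, in the appropriate cells produces the remaining entries, and dropping the $3\,$- or $7\,$-contribution from $B(\{1,1,1\})$ accounts for the constant shifts $3\mapsto 2\mapsto 1\mapsto 0$ across the tables. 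This is the bulk of the argument, but it is purely bookkeeping and collection of like terms; there is no conceptual obstacle, since all the fixed-point counting was already carried out in Subsections 5.1--5.3. The only genuine scope for error is in correctly crossing the three conditions, so as a consistency check I would verify in each cell that $\mathcal{N}(3,\mathbb{F}_q)$ comes out a nonnegative integer — in particular that the fractional parts of $c$, of $q(q^2-1)/2$ (or $q^3/2$), and of $(q-1)^2/3$ (or $q^2/3$, $(q^2-1)/3$) cancel — which is a useful test of the case division.
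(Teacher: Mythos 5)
Your proposal is correct and follows exactly the paper's own route: the theorem is obtained by substituting the branch-wise contributions $B(\{1,1,1\})$, $B(\{1,2\})$, $B(\{3\})$ computed in Subsections 5.1--5.3 into the master formula of Theorem \ref{T1}, dividing by $(q-1)^3$, using $|X|=q^3(q-1)^3(q+1)(q^2+q+1)$ to extract the common term $c$, and crossing the cases on $p$, $3\mid(q-1)$, $7\mid(q-1)$ (with $p=3$ excluding $3\mid(q-1)$). Only note the harmless slip that $p=3$ gives $q\equiv 0$, hence $q-1\equiv -1 \pmod 3$, which is what yields $3\notdivides(q-1)$; otherwise the bookkeeping and the resulting table entries are exactly as in the paper.
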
 
\par\medskip
\section{The case $n=4$}
There are $5$ partitions of the integer $4$: $\{1,1,1,1\}, \{1,1,2\}$, $\{1,3\}$, $\{2,2\}$, and $\{4\}$. The corresponding $d(\mu)$ are $4!, 4, 3, 8, 4$, respectively. 
\par
\subsection {The case $\mu=\{1,1,1,1\}$}
\par\medskip
For a given $t =(t_1,t_2,t_3, t_4), t_1t_2t_3t_4\ne 0$,the matrix in (\ref{e37}) is $C = \mbox{diag}(D_1,D_2,D_3,D_4)$, where 
\bea\label{e611}
D_i =\mbox{diag}(t_i-t_1^2, t_i-t_2^2, t_i - t_3^2, t_i-t_4^2),\;1\le i\le 4.
\eea
The conditions for all $D_ix_i^T = 0, 1\le i\le 4$, to have nontrivial solutions are
\be
(t_i - t_1^2)(t_i - t_2^2)(t_i - t_3^2)(t_i-t_4^2) = 0,\; 1\le i \le 4.
\ee
As in the case of $n=3$, we discuss the cases according to whether $t_i =1$ or not. This leads to the cases listed below for further consideration. For all other choices of $t$, $b(\mu,t) = 0$.
\par\medskip
{\bf (6.1a).} All $t_i = 1$. Note that if $3$ of the $t_i$ are $1$, then the fourth must also be $1$. In this case, $\mu t= I_4$ and 
\bea\label{e612}
b(\mu, t) &=& |X| = (q^4-1)(q^4-q)(q^4-q^2)(q^4-q^3)\\
                   {} &=& q^6(q-1)^4(q+1)^2(q^2+1)(q^2+q+1). \nonumber
\eea
\par\medskip
{\bf (6.1b).} Two $t_i = 1$, the other two are not equal (otherwise they are also equal to $1$) and are both primitive roots of $x^3 - 1$. This happens if and only if $3|(q-1)$. In this case, there are $12$ of these $t$. By symmetry, we just need to consider the case $t=(1,1,t_3,t_3^2)$, where $t_3$ is a primitive root of $x^3-1$. The matrices $D_i$ in (\ref{e611}) and their corresponding solutions are ($\ast$ indicates something nonzero): 
\be
D_1 = \mbox{diag}(0,0,\ast,\ast)\;&\longrightarrow& x_1 = (x_{11}, x_{12}, 0,0),\\
D_2 = \mbox{diag}(0,0,\ast,\ast)\;&\longrightarrow& x_2 = (x_{21}, x_{22}, 0,0),\\
D_3 = \mbox{diag}(\ast,\ast,\ast,0)\;&\longrightarrow& x_3 = (0,0,0,x_{34}),\\
D_4 = \mbox{diag}(\ast,\ast, 0,\ast)\;&\longrightarrow& x_4 = (0,0,x_{43},0).
\ee
Counting the number of these $x_1, x_2, x_3, x_4$ that are linearly independent, we obtain
\bea\label{e613}
b(\mu, t) = (q^2-1)(q^2-q)(q-1)^2=q(q-1)^4(q+1)
\eea
for each of these $t$.
\par\medskip
{\bf (6.1c).} One $t_i=1$, the other three $t_i$ are not equal to $1$. There are two possible subcases for these $t$.
\par
(6.1c1). If $7|(q-1)$, then there is the following possibility: the three $t_i$ that are not $1$ are different primitive roots of $x^7 -1$ and satisfy a cyclic relation (otherwise, $t$ will be in the case (6.1c2) below) such as $t_1=t_3^2, t_2 = t_1^2, t_3 = t_2^2$. Since there are $4$ possible choices for a $t_i$ to be  $1$, and for a fixed $t_i=1$, there are $2$ ways to choose a directed cycle of length $3$ for the other $t_i$, the number of these $t$ is $48$ ($6$ primitive roots). By symmetry, we consider the case $t =(1, t_2,t_2^4, t_2^2)$, where $t_2$ is a primitive root of $x^7-1$. For this $t$, we have 
{\small \be
\quad x_1=(x_{11},0,0,0), x_2 = (0,0,x_{21},0), x_3=(0,0,0,x_{34}), x_4=(0,x_{42},0,0).
\ee}
Thus for each of these $t$, we have 
\bea\label{e614}
b(\mu, t) = (q-1)^4.
\eea
\par
(6.1c2). If $3|(q-1)$, then there is the following possibility: the three $t_i$ that are not $1$ are primitive roots of $x^3 -1$ such that two of them are equal, and there are $24$ of these $t$. However, these $t$ lead to $b(\mu,t)=0$, since the two equal $t_i$ lead to dependent solutions for the corresponding $x_i$.
\par\medskip
{\bf (6.1d).} No $t_i = 1$. In this case, there are several possibilities for these $t$. Note that there are at least two of the $t_i$ must satisfy a circular relation, i.e. $t_i = t_j^2, t_j = t_i^2$.
\par\medskip
(6.1d1). All $t_i$ satisfy a circular relation such as 
\be
t_1 = t_2^2, t_2 = t_3^2, t_3 = t_4^2, t_4 = t_1^2.
\ee
These $t$ can be further divided into the following subcases.
\par
(6.1d1.1). Assume $3|(q-1)$. All $t_i$ are primitive roots of $x^3-1$ and the number of these $t$ is $6$ (this is equal to the number of all configurations of $(1,2,1,2)$). By using (\ref{e611}), we can see that for each of these $t$, the number of fixed points is equal to number of elements in $GL_2(\mathbb{F}_q)\times GL_2(\mathbb{F}_q)$. For example, for the configuration $(1,2,1,2)$, the matrices $D_i$ in (\ref{e611}) and their corresponding solutions are ($\ast$ indicates something nonzero): 
\be
D_1 = \mbox{diag}(\ast,0,\ast,0)\;&\longrightarrow& x_1 = (0, x_{12}, 0,x_{14}),\\
D_2 = \mbox{diag}(0,\ast, 0,\ast)\;&\longrightarrow& x_2 = (x_{21}, 0,x_{23},0),\\
D_3 = \mbox{diag}(\ast,0,\ast,0)\;&\longrightarrow& x_3 = (0,x_{32},0,x_{34}),\\
D_4 = \mbox{diag}(0, \ast, 0,\ast)\;&\longrightarrow& x_4 = (x_{41},0,x_{43},0).
\ee
Thus $x_i, 1\le i\le 4$, are independent if and only if both 
\be
\begin{pmatrix} x_{12} & x_{14}\\ x_{32} & x_{34}\end{pmatrix}\quad \mbox{and}\quad
\begin{pmatrix} x_{21} & x_{23}\\ x_{41} & x_{43}\end{pmatrix}
\ee
are nonsingular. Therefore in this case,
\bea\label{e615}
b(\mu, t) = (q-1)^4q^2(q+1)^2.
\eea
\par
(6.1d1.2). Assume $5|(q-1)$. All $t_i$ are primitive roots of $x^5-1$ and the number of these $t$ is $24$ (all permutations of the $4$ primitive roots). For each of these $t$, we have 
\bea\label{e616} 
b(\mu, t) = (q-1)^4.
\eea
\par
(6.1d1.3). Assume $15|(q-1)$. All $t_i$ are primitive roots of $x^{15}-1$ and the number of these $t$ is $48$. We also have 
\bea\label{e617}
b(\mu, t) = (q-1)^4.
\eea
\par\medskip
(6.1d2). Assume $7|(q-1)$. Three of the $t_i$ satisfy a circular relation, all $t_i$ are primitive roots of $x^7-1$, and there are $78$ of these $t$. All these $t$ lead to $b(\mu, t) = 0$. 
\par\medskip
(6.1d3). Assume $3|(q-1)$. The maximum number of the $t_i$ satisfy a circular relation is $2$, the $t_i$ are primitive roots of $x^3-1$, and there are $14$ of these $t$. Among them, $6$ with two circular relations have been considered in (6.1d1.1), and the other $8$ remaining $t$ with one circular relation of length $2$ lead to $b(\mu, t) = 0$.
\par\medskip
To summarize our discussions, we introduce the following factor indication function to simplify our notation. Let $m>1$ be an integer. We set
\bea\label{e618}
P_m=P_m(q-1) :=\begin{cases} 1, &\mbox{if $m|(q-1)$};\\
                                                         0,  &\mbox{otherwise}. \end{cases}
\eea
Multiply the $b(\mu, t)$'s given by (\ref{e612})-(\ref{e617}) by their corresponding numbers of $t$, then sum them up and divide by $4!$, we have the contribution of $\mu = \{1,1,1,1\}$ to the sum in (\ref{e26}):
{\small\bea\label{e619}
\quad B(\{1,1,1,1\}) &:=& \frac{1}{4!}[q^6(q-1)^4(q+1)^2(q^2+1)(q^2+q+1)\\ \nonumber
          &  & +P_3(12q(q-1)^4(q+1)+6q^2(q-1)^4(q+1)^2)\\ \nonumber
          &  &  +48(P_7(q-1)^4 +P_{15}(q-1)^4) + 24P_5(q-1)^4]. \nonumber
\eea}
For example, if none of $3,5,7$ divides $q-1$, then 
\be
B(\{1,1,1,1\}) = \frac{1}{4!}q^6(q-1)^4(q+1)^2(q^2+1)(q^2+q+1).
\ee
\par\medskip
\subsection {The case $\mu=\{1,1,2\}$} 
The matrix in (\ref{e37}) is $C = \mbox{diag}(D_1,D_2,D_3)$, where 
{\footnotesize\be
D_i =\begin{pmatrix}t_i-t_1^2 & {} & {}\\ {} & t_i-t_2^2 & {} & {}\\{} & {} & t_i & -t_3^2\\ {} & {} & -t_4^2 & t_i\end{pmatrix},\;i=1,2;\;
D_3 = \begin{pmatrix} t_3I_4& -t^2\mu^{-1}\\ -t^2\mu^{-1} & t_4I_4 \end{pmatrix}.
\ee}
The conditions in Theorem 3.1 lead to the following system of equations:
\bea\label{e620}
(t_i - t_1^2)(t_i - t_2^2)(t_i^2-t_3^2t_4^2) & = & 0,\; i = 1, 2;\\ \nonumber
(t_3t_4-t_1^4)(t_3t_4-t_2^4)(1-t_3t_4) &= & 0. \nonumber
\eea
These equations in turn lead to the cases listed below for further consideration. For all other choices of $t$, $b(\mu,t) = 0$.
\par\medskip
{\bf (6.2a).} $t_1=t_2=t_3t_4 = 1$. There are $q-1$ of these $t$. Given one such $t$, we row reduce $D_1, D_2, D_3$, respectively, to
{\small\be
D_1' =D_2'=\begin{pmatrix}0 & {} & {} & {}\\ {} & 0 & {} & {}\\ 
                                {} & {} & 1 & -t_3^2\\ {} & {} & 0 & 0\end{pmatrix},\;\;
D_3'=\begin{pmatrix} t_3I_4& -t^2\mu^{-1}\\ 0 & 0 \end{pmatrix}. 
\ee}
From these reduced matrices, we obtain the solutions of (\ref{e35}), then we form a matrix with the rows given by these solutions and perform row reduction: first add $-1$ times row $3$ to row $4$, then add $t_3^2$ times column $4$ to column $3$ (note that $t_4 = t_3^{-1}$):
{\footnotesize\be
\begin{pmatrix} x_{11} & x_{12} & t_3^2x_{14} & x_{14} \\ x_{21} & x_{22} & t_3^2x_{24} & x_{24} \\
x_{41} & x_{42} & t_3^2x_{44} & t_3^{-2} x_{43}\\ x_{41} & x_{42} & x_{43} & x_{44} \end{pmatrix}
\longrightarrow 
\begin{pmatrix} x_{11} & x_{12} & 2t_3^2x_{14} & x_{14} \\ x_{21} & x_{22} & 2t_3^2x_{24} & x_{24} \\
x_{41} & x_{42} & t_3^2x_{44}+x_{43} & t_3^{-2} x_{43}\\ 0 & 0 & 0 & x_{44}-t_3^{-2}x_{43} \end{pmatrix}.
\ee}
Thus we need $x_{44}-t_3^{-2}x_{43}\ne 0$ and the following matrix to have rank $3$: 
{\small\be
A=\begin{pmatrix} x_{11} & x_{12} & 2t_3^2x_{14} \\ x_{21} & x_{22} & 2t_3^2x_{24} \\
x_{41} & x_{42} & t_3^2x_{44}+x_{43}  \end{pmatrix}.
\ee}
Next, we divide the discussion into two cases according to $p=2$ or not.
\par
If $p = 2$, then $2t_3^2x_{14} = 2t_3^2x_{24}=0$, so we need 
\be
x_{44}-t_3^{-2}x_{43} = x_{44}+t_3^{-2}x_{43}\ne 0, 
\ee
and $(x_{11}, x_{12}), (x_{21},x_{22})$ are linearly independent. There are 
\be
(q^2-1)(q^2-q)
\ee
linearly independent pairs of $(x_{11}, x_{12}), (x_{21},x_{22})$. The number of $(x_{43}, x_{44})$ such that $x_{44}-t_3^{-2}x_{43}\ne 0$ is $q(q-1)$. The variables $x_{14}, x_{24}, x_{41}, x_{42}$ are free. Thus if $p=2$, 
\bea\label{e621}
b(\mu,t) = q^6(q-1)^3(q+1).
\eea
\par
If $p\ne 2$, then since $2t_3^2$ is invertible, the number of $A$ that have rank $3$ can be obtained similar to the counting of the elements in $GL_3(\mathbb{F}_q)$. That is, the first row of $A$ can be any nonzero vector, and there are $q^3-1$ of them. The second row must not be a multiple of the first row, and there are $q^3-q$ of them. Now the third row is slightly different from the usual case, we need to count the number of $x_4=(x_{41}, x_{42}, x_{43}, x_{44})$ such that $x_{44}-t_3^{-2}x_{43}\ne 0$ (total $q^3(q-1)$) and $(x_{41}, x_{42}, t_3^2x_{44}+x_{43})$ is not a linear combination of the first two rows. Given a linear combination of the first two rows, the number of $x_4$ such that $(x_{41}, x_{42}, t_3^2x_{44}+x_{43})$ is equal to this vector is $q$. So we need to subtract $q^3$ from $q^3(q-1)$. But the case that $x_{43}=x_{44}=0$ (a total of $q^2$ of these $x_4$) is already ruled out by the restriction that $x_{44}-t_3^{-2}x_{43}\ne 0$. Thus the number of $x_4$'s that make (together with the given (and fixed) independent vectors $x_1=(x_{11}, x_{12},t_3^2x_{14},x_{14})$ and $x_2=(x_{21},x_{22},t_3^2x_{24},x_{24})$) a full rank $A$ is 
\be
q^3(q-1) - q^3 + q^2 =q^2(q-1)^2.
\ee 
Thus if $p\ne 2$, then
\bea\label{e622}
b(\mu,t) &=& (q^3-1)(q^3-q)q^2(q-1)^2\\ \nonumber
            {} &=&q^3(q-1)^4(q+1)(q^2+q+1).\nonumber
\eea
\par
\par\medskip
{\bf (6.2b).} $t_1=1, t_2\ne 1, t_2^2 = t_3^2t_4^2$ or $t_1\ne 1, t_2 = 1, t_1^2 = t_3^2t_4^2$. It turns out that for these cases, $b(\mu, t) = 0$. Consider for example, the case $t_1=1, t_2\ne 1, t_2^2 = t_3^2t_4^2$. These relations are obtained from the first two equations in (\ref{e620}) under the assumption that $t_1 = 1, t_2\ne 1$. Then the third equations in (\ref{e620}) implies that $t_3t_4 = t_2^4$, which in turn implies that $(t_3t_4)^3 = 1$. Then it separates into subcases according to $p = 2$ or not, and $3|(q-1)$ or not. For example, if $p=2$, since $t_2\ne 1 =-1$, $t_3t_4 \ne 1$. It follows that this case happens only if $3|(q-1)$ and $t = (1, t_2, t_3, t_3^{-1}t_2)$, where $t_2$ is a primitive root of $x^3-1$. Thus, the reduced matrix of $D_3$ is
{\small\be
D_3'=\begin{pmatrix} t_3I_4& -t^2\mu^{-1}\\ 0 & R_3 \end{pmatrix}, \;\mbox{where}\; 
R_3 = \mbox{diag}(\ast, 0, \ast, \ast). 
\ee}
This coefficient matrix gives linearly dependent solutions $x_3$ and $x_4$ and thus $b(\mu,t) =0$. Similarly, if $p\ne 2$, both $3|(q-1)$ and $3\notdivides (q-1)$ lead to $b(\mu,t) =0$. 
\par\medskip
{\bf (6.2c).} Both $t_1, t_2 \ne 1$.
\par\medskip
(6.2c1) Assume $3|(q-1)$. $t_1\ne t_2$ are primitive roots of $x^3-1$, $t_3t_4 = t_1$ or $t_3t_4 = t_2$. In both of these cases, $b(\mu, t)=0$. For example, consider $t_3t_4 = t_1$, then we can reduce $D_2$ and $D_3$ to
{\small\be
D_2' = \begin{pmatrix}0 & {} & {} & {}\\ {} & 1-t_2 & {} & {}\\ 
                                {} & {} & t_2 & -t_3^2\\ {} & {} & 0 & t_2^2-t_3^2t_4^2 \end{pmatrix},\;\;
D_3'=\begin{pmatrix} t_3I_4& -t^2\mu^{-1}\\ 0 & R_3 \end{pmatrix}, 
\ee}
where $R_3 = \mbox{diag}(0,\ast,\ast,\ast)$. So the solutions $x_2 = (x_{21},0,0,0)$ and $x_4=(x_{41},0,0,0)$ are dependent. Similarly for $t_3t_4 = t_2$. 
\par\medskip
(6.2c2) Assume $3|(q-1)$. $t_1\ne t_2$ are primitive roots of $x^3-1$ and $t_3t_4 = 1$, there are $2(q-1)$ of these $t$'s. In this case, the solutions look like
\be
x_1=(0,x_{12},0,0), &{}& x_3=(0,0,t_3x_{44}, t_3^{-1}t_4^2x_{43}),\\
x_2=(x_{21},0,0,0), &{}& x_4=(0,0,x_{43},x_{44}).
\ee
Thus we need $t_3x_{44}^2 - t_3^{-1}t_4^2x_{43}^2\ne 0$. Given $t_3, t_4$, and $x_{43}$, we need $x_{44}\ne \pm t_3^{-1}t_4x_{43}$. If $p=2$, there is only one condition $x_{44}\ne t_3^{-1}t_4x_{43}$, so there are $q(q-1)$ of these pairs of $(x_{43},x_{44})$. Since there are $(q-1)^2$ pairs of $x_1, x_2 \ne 0$, we have
\bea\label{e623}
b(\mu,t) = q(q-1)^3. \qquad (3|(q-1), p= 2)
\eea
Similarly,
\bea\label{e624}
b(\mu,t) = (q-1)^4. \qquad (3|(q-1), p\ne 2)
\eea
\par\medskip
(6.2c3) All the following cases lead to $b(\mu,t)=0$: 
\par
(1) $7|(q-1)$, $t_1$ a primitive root of $x^7-1$, $t_1 = t_2^2=t_3^2t_4^2$. 
\par
(2) $t_1 = t_2^2=t_3^2t_4^2$, and $t_3t_4 = t_2^4$. 
\par
(3) $t_1^2 = t_2^2=t_3^2t_4^2$.
\par\medskip
Now multiply (\ref{e621}) and (\ref{e622}) by $q-1$, multiply (\ref{e623}) and (\ref{e624}) by $2(q-1)$, sum them up and divide by $4$, we have the contribution of $\mu = \{1,1,2\}$ to the sum in (\ref{e26}):
{\footnotesize \bea\label{e625}
\qquad B(\{1,1,2\}):=\begin{cases} \frac{1}{4}q(q-1)^4(q^6+q^5+2P_3), & \mbox{if $p=2$},\\
                                     \frac{1}{4}(q-1)^5((q^4+q^3)(q^2+q+1)+2P_3), & \mbox{if $p\ne 2$},
                                     \end{cases}
\eea}
where $P_3$ is the factor indicator as defined by (\ref{e618}).
\par\medskip
\subsection {The case $\mu=\{1,3\}$}
The matrix in (\ref{e37}) is $C = \mbox{diag}(D_1,D_2)$, where 
{\tiny\be
D_1 =\begin{pmatrix}t_1-t_1^2 & {} & {} & {}\\ {} & t_1 &-t_2^2 & 0 \\{} & 0 & t_1 & -t_3^2\\ {} & -t_4^2 & 0 & t_1\end{pmatrix} &\rightarrow &
D_1' =\begin{pmatrix}t_1-t_1^2 & {} & {} & {}\\ {} & t_1 &-t_2^2 & 0 \\{} & 0 & t_1 & -t_3^2\\ {} &0 & 0& t_1^3-(t_2t_3t_4)^2 \end{pmatrix}\\
D_2 = \begin{pmatrix} t_2I_4& -t^2\mu^{-1} & 0\\ 0 & t_3I_4 & -t^2\mu^{-1} \\
                                  -t^2\mu^{-1} & 0 & t_4I_4 \end{pmatrix} &\rightarrow &
D_2' = \begin{pmatrix} t_2I_4& -t^2\mu^{-1} & 0\\ 0 & t_3I_4 & -t^2\mu^{-1} \\
                                  0 & 0 & t_2t_3t_4I_4-(t^2\mu^{-1})^3 \end{pmatrix}.
\ee}
By eliminating the nonzero factor $t_2t_3t_4$, we have
\bea\label{e631}
{} & & t_2t_3t_4I_4-(t^2\mu^{-1})^3 \rightarrow\\ \nonumber 
& &  \mbox{diag}(t_2t_3t_4-t_1^6, 1-t_2t_3t_4, 1-t_2t_3t_4, 1-t_2t_3t_4).
\eea
Thus the conditions in Theorem 3.1 lead to the following equations (the nonzero factors $t_i$ are omitted):
\bea\label{e632}
(t_1 - 1)(t_1^3-(t_2t_3t_4)^2) & = & 0,\\ \nonumber
(t_2t_3t_4-t_1^6)(t_2t_3t_4-1) &= & 0. 
\eea
These equations in turn lead to the cases listed below for further consideration. For all other choices of $t$, $b(\mu,t) = 0$.
\par\medskip
{\bf (6.3a).} $t_1=t_2t_3t_4 = 1$. There are $(q-1)^2$ of these $t$. Given one such $t$, we obtain the solutions $x_1,x_2,x_3, x_4$ (up to a nonzero scalar multiple) from the corresponding $D_1'$ and $D_2'$, and consider the matrix $(x_1^T, x_4^T, x_3^T, x_2^T)$ ($r_i$ means row $i$; $t_2^2r_3$ means multiply row $3$ by $t_2^2$, etc.):
{\footnotesize\be
\begin{pmatrix}x_{11} & x_{41} & x_{41} & x_{41}\\ 
                       t_2^2t_3^2x_{14} & x_{42} & t_2^2x_{43} & t_2^2t_3^2x_{44}\\ 
                       t_3^2x_{14} & x_{43} & t_3^2x_{44} & t_3^2t_4^2x_{42}\\ 
                       x_{14} & x_{44} & t_4^2x_{42} & t_2^2t_4^2x_{43}\end{pmatrix} 
                       \stackrel{t_2^2r_3, t_2^2t_3^2r_4}{\longrightarrow}
\begin{pmatrix}x_{11} & x_{41} & x_{41} & x_{41}\\ 
                       t_2^2t_3^2x_{14} & x_{42} & t_2^2x_{43} & t_2^2t_3^2x_{44}\\ 
                       t_2^2t_3^2x_{14} & t_2^2x_{43} & t_2^2t_3^2x_{44} & x_{42}\\ 
                       t_2^2t_3^2x_{14} & t_2^2t_3^2x_{44} & x_{42} & t_2^2x_{43}\end{pmatrix}.
\ee}
\par\medskip
(6.3a1). If $x_{11}\ne 0$ ($q-1$ of them), let $b=t_2^2t_3^2x_{14}x_{11}^{-1}x_{41}$, subtract $t_2^2t_3^2x_{14}x_{11}^{-1}r_1$ from $r_2, r_3, r_4$ in the last matrix, we see that we need the following matrix to have rank $3$:
{\small\be
\begin{pmatrix} x_{42} -b& t_2^2x_{43}-b & t_2^2t_3^2x_{44}-b\\ 
                       t_2^2x_{43} -b& t_2^2t_3^2x_{44}-b & x_{42}-b\\ 
                       t_2^2t_3^2x_{44}-b & x_{42}-b & t_2^2x_{43}-b\end{pmatrix}.
\ee}
This matrix is a circulant matrix, its polynomial presenter (in the variable $y$) is
\be
f(y) = (x_{42}-b) + (t_2^2x_{43} - b)y + (t_2^2t_3^2x_{44}-b)y^2.
\ee
As in the case of $\mu = \{3\}$ in (\ref{e58}), we need (c.f. (\ref{e59}))
{\small
\bea\label{e633}
\qquad f(1)=x_{42} + t_2^2x_{43}+ t_2^2t_3^2x_{44}-3b &\ne& 0\;\mbox{and}\\ \nonumber
\mbox{gcd}(f(y), y^2+y+1) = 1. 
\eea}
Use an argument similar to the one in the case $\mu = \{3\}$, notice that $x_{14}$ and $x_{41}$ are arbitrary, we have (under the condition $x_{11}\ne 0$)
\bea\label{e634}
b_1(\mu, t) = \begin{cases} q^4(q-1)^2, &\mbox{if $p =3$},\\
                                         q^2(q-1)^4, &\mbox{if $3|(q-1)$},\\
                                          q^2(q-1)^3(q+1), &\mbox{otherwise}. \end{cases}
\eea
\par\medskip
(6.3a2). Assume $x_{11}=0$. Then we need $x_{14}, x_{41} \ne 0$ and have
{\tiny\be
\begin{pmatrix} 0 & x_{41} & x_{41} & x_{41}\\ 
                       t_2^2t_3^2x_{14} & x_{42} & t_2^2x_{43} & t_2^2t_3^2x_{44}\\ 
                       t_2^2t_3^2x_{14} & t_2^2x_{43} & t_2^2t_3^2x_{44} & x_{42}\\ 
                       t_2^2t_3^2x_{14} & t_2^2t_3^2x_{44} & x_{42} & t_2^2x_{43}\end{pmatrix}
                       \longrightarrow
\begin{pmatrix} 0 & 1 & 1 & 1\\ 
                       1 & x_{42} & t_2^2x_{43} & t_2^2t_3^2x_{44}\\ 
                       1 & t_2^2x_{43} & t_2^2t_3^2x_{44} & x_{42}\\ 
                       1 & t_2^2t_3^2x_{44} & x_{42} & t_2^2x_{43}\end{pmatrix}.
\ee}
\par
If $p=3$, by using row operations, we see that the matrix is not full rank. For example, add both row $2$ and row $3$ to row $4$, we see that the new row $4$ is a multiple of row $1$. Thus $b(\mu,t)=0$ in this case. 
\par
If $p\ne 3$, let 
\be
b' = \frac{1}{3}(1+x_{42}+t_2^2x_{43}+t_2^2t_3^2x_{44}),
\ee
and perform the following operations on the matrix: first add $r_2,r_3,r_4$ to $r_1$ in the last matrix, then subtract $\frac{1}{3}r_1$ from $r_2,r_3,r_4$, to get
{\small\be
\begin{pmatrix} 3 & 3b' & 3b' & 3b'\\ 
                       0 & x_{42}-b' & t_2^2x_{44}-b' & t_2^2t_3^2x_{44}-b'\\ 
                       0 & t_2^2x_{43}-b' & t_2^2t_3^2x_{44}-b' & x_{42}-b'\\ 
                        0& t_2^2t_3^2x_{44}-b' & x_{42}-b' & t_2^2x_{43}-b'\end{pmatrix}.
\ee}
So we are back to a $3\times 3$ circulant matrix similar to the one considered before (case (6.3a1)) with the following difference (c.f. the first condition in (\ref{e633})): we have in this case
\be
x_{42} + t_2^2x_{43}+ t_2^2t_3^2x_{44}-3b' = -1 
\ee
is always $\ne 0$. So by similar arguments, we have (under the assumption that $p\ne 3$, $x_{11}=0$, and $x_{14}, x_{41} \ne 0$):
\bea\label{e635}
b_2(\mu,t) = \begin{cases} (q-1)^5, &\mbox{if $3|(q-1)$};\\
                                       q(q-1)^3(q+1), &\mbox{if $3\notdivides (q-1)$}.
                \end{cases}
\eea
Now we combine (\ref{e634}) and (\ref{e635}) to obtain
{\small \bea\label{e636}
\quad b(\mu, t) = \begin{cases} q^4(q-1)^2, &\mbox{if $p =3$},\\
                                         (q-1)^4(q^2+q-1), &\mbox{if $3|(q-1)$},\\
                                          q(q-1)^3(q+1)^2, &\mbox{otherwise}. \end{cases}
\eea}
\par\medskip
{\bf (6.3b)}. $t_1\ne 1$. All these $t$ lead to $b(\mu,t) = 0$. For example, take the following case from (\ref{e632}): 
\be
t_1^3-(t_2t_3t_4)^2  = 0\quad\mbox{and}\quad
t_2t_3t_4-t_1^6 = 0.
\ee
Then we have $t_1^9 = 1$, and thus $t_1$ is a primitive root of $x^3 - 1$ or $x^9-1$. Let us consider the case that $t_1$ is a primitive root of $x^9-1$ under the assumption that $9|(q-1)$ (for example $q=2^6$). Then the second matrix in (\ref{e631}) is $\mbox{diag}(0, \ast, \ast, \ast)$, where $\ast\ne 0$. This leads to linearly dependent $x_2, x_3, x_4$, so $b(\mu,t) = 0$. We omit the details of the other cases, since the computations are straightforward and similar. 
\par\medskip
Now, multiply (\ref{e636}) by $(q-1)^2$ (the number of $t$) and divide by $3$, we have the contribution of $\mu = \{1,3\}$ to the sum in (\ref{e25}):
{\footnotesize\bea\label{e637}
\qquad\;\; B(\{1,3\}):=\begin{cases} \frac{1}{3}q^4(q-1)^4, &\mbox{if $p=3$};\\
                                    \frac{1}{3}(q-1)^6(q^2+q-1), &\mbox{if $3|(q-1)$};\\
                                    \frac{1}{3}q(q-1)^5(q+1)^2, &\mbox{if $p\ne 3$ and $3\notdivides (q-1)$}.
             \end{cases}
\eea}
\par\medskip
\subsection {The case $\mu=\{2,2\}$}
The matrix in (\ref{e37}) is $C = \mbox{diag}(D_1,D_2)$, where 
{\footnotesize\be
D_1 =\begin{pmatrix}t_1I_4 & -t^2\mu^{-1}\\ -t^2\mu^{-1} & t_2I_4 \end{pmatrix} &\rightarrow &
D_1' =\begin{pmatrix}t_1I_4 & -t^2\mu^{-1}\\ 0 & R_1 \end{pmatrix},\\
D_2 =\begin{pmatrix}t_3I_4 & -t^2\mu^{-1}\\ -t^2\mu^{-1} & t_4I_4 \end{pmatrix} &\rightarrow &
D_2' =\begin{pmatrix}t_3I_4 & -t^2\mu^{-1}\\ 0 & R_2 \end{pmatrix},
\ee}
where $t^2\mu^{-1}$ is a diagonal block matrix with diagonal blocks {\small $\begin{pmatrix} 0 & t_1^2\\
t_2^2 & 0\end{pmatrix}$} and {\small $\begin{pmatrix} 0 & t_3^2\\ t_4^2 & 0 \end{pmatrix}$}, and
\bea\label{e641}
R_1 &=& \mbox{diag}(1-t_1t_2, 1-t_1t_2, t_1t_2-t_3^2t_4^2, t_1t_2-t_3^2t_4^2),\\ \nonumber
R_2 &=& \mbox{diag}(t_3t_4-t_1^2t_2^2, t_3t_4-t_1^2t_2^2,1-t_3t_4,1-t_3t_4).
\eea
The conditions in Theorem 3.1 lead to the following system of equations:
\bea\label{e642}
(1-t_1t_2)(t_1t_2-t_3^2t_4^2) & = & 0,\\ \nonumber
(t_3t_4-t_1^2t_2^2)((1-t_3t_4) &= & 0. 
\eea
These equations in turn lead to the cases listed below for further consideration. For all other choices of $t$, $b(\mu,t) = 0$.
\par
We have used $r_i$ for row $i$ of a matrix, and we will use $c_i$ for column $i$ of a matrix. In our matrix operations below, $ac_i$ means multiplying column $i$ by $a$ and $c_i+c_j$ ($i\ne j$) means adding column $j$ to column $i$, in that order.
\par\medskip
{\bf (6.4a).} $t_1t_2=t_3t_4 = 1$. There are $(q-1)^2$ of these $t$. Given one such $t$, we obtain the solutions $x_1,x_2,x_3, x_4$ (up to a nonzero scalar multiple) from the corresponding $D_1'$ and $D_2'$ (note that for these $t$ values, $R_1=R_2=0$):
\be
x_2=(x_{21}, x_{22}, x_{23}, x_{24}),\; x_1=(t_1^2x_{22}, t_2^2x_{21}, t_3^2x_{24}, t_4^2x_{23}),\\
x_4=(x_{41}, x_{42}, x_{43}, x_{44}),\; x_3=(t_1^2x_{42}, t_2^2x_{41}, t_3^2x_{44}, t_4^2x_{43}).
\ee 
Consider the matrix formed by using the $x_i$ as rows ($t_2^{-1} = t_1, t_4^{-1} = t_3$):
{\footnotesize\be
\begin{pmatrix}x_{21} & x_{22} & x_{23} & x_{24}\\ 
                       t_1^2x_{22} & t_2^2x_{21} & t_3^2x_{24} & t_4^2x_{23}\\ 
                       x_{41} & x_{42} & x_{43} & x_{44}\\ 
                       t_1^2x_{42}& t_2^2x_{41}& t_3^2x_{44}& t_4^2x_{43}\end{pmatrix} 
                       \stackrel{t_2^{-2}c_2, \; t_4^{-2}c_4}{\longrightarrow}
\begin{pmatrix}x_{21} & t_1^2x_{22} & x_{23} & t_3^2x_{24}\\ 
                       t_1^2x_{22} & x_{21} & t_3^2x_{24} & x_{23}\\ 
                       x_{41} & t_1^2x_{42} & x_{43} & t_3^2x_{44}\\ 
                       t_1^2x_{42}& x_{41}& t_3^2x_{44}& x_{43}\end{pmatrix}.
\ee}
Let the first row of the last matrix be $(a, b, c, d)$ and let the third row be $(a', b', c', d')$, then the matrix is 
{\footnotesize
\be
\begin{pmatrix} a & b & c & d \\ 
                         b & a & d & c\\ 
                         a' & b' & c' & d' \\ 
                         b' & a' & d' & c'\end{pmatrix} \stackrel{c_2+c_1, c_4+c_3}{\longrightarrow}
\begin{pmatrix} a & a+b & c & c+d \\ 
                         b & a+b & d & c+d\\ 
                         a' & a'+b' & c' & c'+d' \\ 
                         b' & a'+b' & d' & c'+d'\end{pmatrix}.
\ee}
Subtract $r_2$ from $r_1$, subtract $r_4$ from $r_3$, interchange $r_2$ and $r_3$, and interchange $c_2$ and $c_3$, we arrive at 
{\footnotesize
\be
\begin{pmatrix} a-b & c-d & 0 & 0 \\ 
                         a'-b' & c'-d' & 0 & 0\\ 
                         b & d & a+b & c+d \\ 
                         b' & d' & a'+b' & c'+d'\end{pmatrix}.
\ee}
Thus we need the following two matrices to be nonsingular:
{\footnotesize
\bea\label{e643}
\begin{pmatrix} a-b & c-d\\ 
                         a'-b' & c'-d' \end{pmatrix}\quad \mbox{and}\quad
                         \begin{pmatrix} a+b & c+d \\ 
                                            a'+b' & c'+d'\end{pmatrix}.
\eea}
\par
If $p=2$, these two matrices are the same, and the conditions are: $(a+b, c+d)\ne 0$ and $(a'+b',c'+d')$ is not a multiple of $(a+b, c+d)$. The first condition is equivalent to: for a given pair $(a,c)$, $(c,d)\ne (a,c)$. So there are $q^2(q^2-1)$ quadruples $(a,b,c,d)$ that satisfy the condition. Now given such a quadruples $(a,b,c,d)$, to find the number of $(a',b',c',d')$ that satisfy the second condition, we first find the number of $(a',b',c',d')$ such that $(a'+b',c'+d')$ is a multiple of $(a+b, c+d)$.  Given any pair $(a',c')$, a pair $(b',d')$ makes $(a'+b',c'+d')$ a multiple of the given $(a+b, c+d)$ if and only if it is on the line $(a+b, c+d)u+(a',c')$, where $u\in\mathbb{F}_q$ is a parameter. Thus the number is $q$. Then the number of $(a',b',c',d')$ such that $(a'+b',c'+d')$ is not a multiple of $(a+b, c+d)$ is equal to $q^2(q^2-q)$. Thus, the total number of desired $(a,b,c,d)$ and $(a',b',c',d')$ is 
\bea\label{e644}
q^2(q^2-1)\cdot q^2(q^2-q) = q^5(q-1)^2(q+1).\quad (p=2) 
\eea
\par
If $p\ne 2$, the number of $(a,b,c,d)$ such that both $(a+b,c+d)\ne 0$ and $(a-b, c-d)\ne 0$ is $(q^2-1)^2$. This is so since the number of $(a,b,c,d)$ such that either $(a+b,c+d)= 0$ or $(a-b, c-d)= 0$ is $2q^2-1$. To find the number of $(a',b',c',d')$ such that the two matrices in (\ref{e643}) are nonsingular, we follow an approach that is similar to the case $p=2$. The difference is now we have two lines for the pairs $(b',d')$ to avoid. To find the total points in the union of the two lines:
\be
\ell_1 : (a+b, c+d)u-(a',c')\quad\mbox{and}\quad \ell_2 : (a-b, c-d)v+(a',c'),
\ee
where $u,v$ are parameters, we consider their intersection. This leads to the following system:
\be
\begin{pmatrix} a & b \\ c & d\end{pmatrix}\begin{pmatrix} u-v\\u+v\end{pmatrix} = 
\begin{pmatrix} 2a'\\ 2c'\end{pmatrix}.
\ee 
Let $A$ be the coefficient matrix. For a given pair $(a', c')$, a solution of the above system determines $u$ and $v$ uniquely. We separate the discussion according to whether $A$ is singular or not. 
\par
If $A$ is nonsingular (the total number of these $A$ is $(q^2-1)(q^2-q)$), then for any given $(a', c')$, the pair $(u,v)$ is uniquely determined, so $|\ell_1\cup \ell_2|=2q-1$ and the number of $(b',d')$ that are not on either of the two lines is $q^2-2q+1$. The number of $(a', c')$ is $q^2$, so the total number of $(a',b',c',d')$ such that the two matrices in (\ref{e643}) are nonsingular is $q^2(q-1)^2$. Note that if $A$ is nonsingular, then both $(a+b,c+d)\ne 0$ and $(a-b, c-d)\ne 0$. So in this case the total number of $(a,b,c,d)$ and $(a',b',c',d')$ we want is 
\be
(q^2-1)(q^2-q)q^2(q-1)^2 = q^3(q-1)^4(q+1).
\ee
\par
If $A$ is singular, then since $A\ne 0$, its rank is $1$. The number of $(a',c')$ such that $\ell_1$ and $\ell_2$ have nontrivial intersection is $q$ (the system is assumed to be consistent). For these cases, $|\ell_1\cup \ell_2|=q$, and the number of $(a',b',c',d')$ we want is $q(q^2-q)$. If $\ell_1$ and $\ell_2$ do not intersect, then $|\ell_1\cup \ell_2|=2q$, so the number of $(a',b',c',d')$ we want is $(q^2-q)(q^2-2q)$. Thus the total number of $(a',b',c',d')$ we want is 
\be
q(q^2-q)+(q^2-q)(q^2-2q) = q^2(q-1)^2.
\ee
We subtract the number of nonsingular $A$ from the total number of $(a,b,c,d)$ such that both $(a+b,c+d)\ne 0$ and $(a-b, c-d)\ne 0$ to obtain the number of singular ones:
\be
(q^2-1)^2 - (q^2-1)(q^2-q)= (q^2-1)(q-1),
\ee
and then obtain the  total number of $(a,b,c,d)$ and $(a',b',c',d')$ we want in the case that $A$ is singular:
\be
q^2(q-1)^2(q^2-1)(q-1) = q^2(q-1)^4(q+1).
\ee
\par
Finally, the number of $(a,b,c,d)$ and $(a',b',c',d')$ that make the two matrices in (\ref{e643}) nonsingular in the case $p\ne 2$ is: 
\bea\label{e645}
&& q^3(q-1)^4(q+1)+q^2(q-1)^4(q+1)\\
&&\qquad = q^2(q-1)^4(q+1)^2.\quad (p\ne 2) \nonumber
\eea
\par\medskip
Summarize, for $\mu =\{2,2\}$ and each of the $t$ (total $(q-1)^2$) such that $t_1t_2=t_3t_4 = 1$, by (\ref{e644}) and (\ref{e645}) we have
\bea\label{e646}
b(\mu,t)=\begin{cases} q^5(q-1)^2(q+1), & \mbox{if $p=2$};\\
                                  q^2(q-1)^4(q+1)^2, & \mbox{if $p\ne 2$}.
             \end{cases}
\eea
\par\medskip
{\bf (6.4b).} $t_1t_2=t_3^2t_4^2\ne 1$ is a primitive root of $x^3-1$. These cases happen if and only $3|(q-1)$, and there are $2(q-1)^2$ of these $t$ if the condition is satisfied. Given one such $t$, the matrices in (\ref{e641}) are
{\small\be
R_1 = \mbox{diag}(1-t_1t_2, 1-t_1t_2, 0, 0),\; R_2 = \mbox{diag}(0, 0,1-t_3t_4, 1-t_3t_4).
\ee}
we obtain the solutions $x_1,x_2,x_3, x_4$ (up to a nonzero scalar multiple) from the corresponding $D_1'$ and $D_2'$:
\be
x_2=(0, 0, x_{23}, x_{24}),\; x_1=(0, 0, t_3^2x_{24}, t_4^2x_{23}),\\
x_4=(x_{41}, x_{42}, 0, 0),\; x_3=(t_1^2x_{42}, t_2^2x_{41}, 0, 0).
\ee
So we have
{\small\be
\det(x_1^T,x_2^T,x_3^T,x_4^T)\ne 0
                      & \Leftrightarrow& (t_4^2x_{23}^2-t_3^2x_{24}^2)(t_2^2x_{41}^2-t_1^2x_{42}^2)\ne 0 \\
                {}      &\Leftrightarrow& t_4x_{23} \ne \pm t_3x_{24}\; \mbox{and}\; t_2x_{41} \ne \pm t_1x_{42}.
\ee}
From the last two relations, we obtain $b(\mu ,t)$ according to $p = 2$ or not:
\bea\label{e647}
b(\mu,t)=\begin{cases} q^2(q-1)^2, & \mbox{if $p=2$ and $3|(q-1)$};\\
                                     (q-1)^4, & \mbox{if $p\ne 2$ and $3|(q-1)$}.
             \end{cases}
\eea
\par\medskip
Summarize, we have the contribution of $\mu = \{2,2\}$ to the sum in (\ref{e26}) (multiply the corresponding $b(\mu,t)$ by $(q-1)^2$ or $2(q-1)^2$ and use factor indicator $P_3$):
{\footnotesize
\bea\label{e648}
\quad B(\{2,2\}):=\begin{cases} \frac{1}{8}[ q^5(q-1)^4(q+1)+2P_3q^2(q-1)^4], & \mbox{if $p=2$};\\
                                    \frac{1}{8}[q^2(q-1)^6(q+1)^2+2P_3(q-1)^6], & \mbox{if $p\ne 2$}.
             \end{cases}
\eea}
\par\medskip
\subsection {The case $\mu=\{4\}$} In this case,
{\small\be
\mu = \begin{pmatrix} 0 & 1\\ I_3 & 0 \end{pmatrix}, \; \mu^{-1} = \begin{pmatrix} 0 & I_3\\ 1 & 0 \end{pmatrix}, \;
t^2\mu^{-1} = \begin{pmatrix} 0 & t_1^2 & 0 & 0 \\ 0 & 0 & t_2^2 & 0\\
                                               0 & 0 & 0 & t_3^2 \\ t_4^2 & 0 &0 & 0 \end{pmatrix}.
\ee}
The matrix in (\ref{e37}) is
{\footnotesize\be
C &=& \begin{pmatrix} t_1I_4 & -t^2\mu^{-1}& 0 & 0 \\ 0 & t_2I_4 & -t^2\mu^{-1} & 0\\
                                               0 & 0 & t_3I_4 & -t^2\mu^{-1} \\ -t^2\mu^{-1} & 0 &0 & t_4I_4 \end{pmatrix}\\
   {} & \rightarrow & \begin{pmatrix} t_1I_4 & -t^2\mu^{-1}& 0 & 0 \\ 0 & t_2I_4 & -t^2\mu^{-1} & 0\\
                                               0 & 0 & t_3I_4 & -t^2\mu^{-1} \\ 0 & 0 &0 & R \end{pmatrix},
\ee}
where $R$ is a $4\times 4$ diagonal matrix with all diagonal entries equal to $t_1t_2t_3t_4 - (t_1t_2t_3t_4)^2$. Thus (\ref{e316}) implies $t_1t_2t_3t_4 = 1$. There are $(q-1)^3$ of these $t$. Given one such $t$, $R = 0$, thus $x_4\ne 0$ is arbitrary, and 
\be
x_3^T = t^2\mu^{-1}x_4^T, x_2^T = (t^2\mu^{-1})^2x_4^T,x_1^T = (t^2\mu^{-1})^3x_4^T.
\ee
Form the matrix with these $x_i$ as rows, multiply $c_2$ by $t_1^2$, multiply $c_3$ by $t_1^2t_2^2$, and multiply $c_4$ by $t_1^2t_2^2t_3^2$, we obtain the following circulant matrix
{\footnotesize\be
A=\begin{pmatrix}x_{41} & t_1^2x_{42} & t_1^2t_2^2x_{43} & t_1^2t_2^2t_3^2x_{44}\\ 
                       t_1^2x_{42} & t_1^2t_2^2x_{43} & t_1^2t_2^2t_3^2x_{44} & x_{41}\\ 
                       t_1^2t_2^2x_{43} & t_1^2t_2^2t_3^2x_{44} & x_{41} & t_1^2x_{42}\\ 
                       t_1^2t_2^2t_3^2x_{44}& x_{41}& t_1^2x_{42}& t_1^2t_2^2x_{43}\end{pmatrix}.
\ee}
Write the first row of $A$ as $(a_0, a_1,a_2,a_3) =: a$, then the polynomial presenter for $A$ is 
\be
f(y) = a_0 +a_1y+a_2y^2+a_3y^3,
\ee
and $A$ is nonsingular if and only if $\mbox{gcd}(f(y),y^4-1)=1$. We discuss the cases according to $p=2$ or not.
\par\medskip
{\bf (6.5a)}. If $p=2$, then $y^4 - 1 = (y-1)^4$, so we need
\be
f(1) \ne 0 \Leftrightarrow a_0+a_1+a_2+a_3 \ne 0.
\ee
Thus there are $q^3(q-1)$ of these $a$ and hence
\bea\label{e651}
b(\mu, t) = q^3(q-1).  \quad (p=2)
\eea 
\par\medskip
{\bf (6.5b)}. If $p\ne 2$, we consider two cases: $4|(q-1)$ or not. 
\par\medskip
(6.5b1).  Assume $4|(q-1)$. Let $\eta$ be a primitive root of $x^4-1$. Then
{\small\be
\mbox{gcd}(f(y),y^4-1)=1 &\Leftrightarrow& f(\eta^k) \ne 0, 0\le k\le 3,\\
&\Leftrightarrow& 
\begin{pmatrix} 1 & 1 & 1 & 1 \\ 1 & \eta & \eta^2 & \eta^3\\
                         1 & \eta^2 & \eta^4 & \eta^6\\ 1 & \eta^3 & \eta^6 & \eta^9
\end{pmatrix}a^T \in (\mathbb{F}^{\times}_q)^4.
\ee}
Since the Vandermonde matrix is nonsingular, there is a bijection between the set of $a$ such that the matrix $A$ is nonsingular and the set $(\mathbb{F}^{\times}_q)^4$, so we have
\bea\label{e652}
b(\mu,t) = (q-1)^4. \quad (p\ne 2, 4|(q-1))
\eea
\par\medskip
(6.5b2). Assume $4\notdivides (q-1)$. Then $y^2+1$ is irreducible, and $y^4-1 = (y^2+1)(y+1)(y-1)$ implies that $\mbox{gcd}(f(y),y^4-1)=1$ if and only if
\bea\label{e653}
{}&{}& a_0+a_1+a_2+a_3\ne 0, \\ \nonumber
{}&{}&\mbox{and}\; a_0-a_1+a_2-a_3\ne 0,\; \\ 
{}&{}&\mbox{and}\;\mbox{gcd}(f(y),y^2+1)=1. \nonumber
\eea
Since $y^2+1$ is irreducible and 
\be
f(y) = (y^2+1)(a_3y+a_2)+(a_1-a_3)y + a_0-a_2,
\ee
$\mbox{gcd}(f(y),y^2+1)=1$ if and only if $a_1-a_3\ne 0$, or $a_1-a_3=0$ but $a_0-a_2\ne 0$. So (\ref{e653}) can be divided into two cases accordingly as follows. 
\par
(6.5b2.1). The conditions on $a$ are
\bea\label{e654}
{}&{}& a_1-a_3 \ne 0, \; \mbox{and}\;a_0+a_1+a_2+a_3\ne 0, \\ \nonumber
{}&{}& \quad\mbox{and}\; a_0-a_1+a_2-a_3\ne 0.
\eea
In this case, if $a_2\ne 0$, then we have the following relation: 
\be
\begin{pmatrix} 0 & 1 & 0 & -1\\ 1 & 1& 1& 1\\ 1 & -1 & 1 & -1\\ 0 & 0 & 1 & 0
\end{pmatrix}a^T\in (\mathbb{F}^{\times}_q)^4,
\ee
which implies that there are $(q-1)^4$ of these $a$ since the coefficient matrix is nonsingular. If $a_2=0$, we have a $3\times 3$ nonsingular matrix instead. So there are $(q-1)^3$ of these $a$. Thus the total number of $a$ that satisfy (\ref{e654}) is:
\be
(q-1)^4+(q-1)^3 = q(q-1)^3.
\ee
\par
(6.5b2.2) The conditions on $a$ are
\be
&& a_1-a_3=0, \;\mbox{and}\; a_0-a_2\ne 0,\\ 
&& \mbox{and}\;a_0+a_1+a_2+a_3\ne 0, \\
&&\mbox{and}\; a_0-a_1+a_2-a_3\ne 0.
\ee
Then $a_1 = a_3$ and
\be
a_0-a_2\ne 0 \; \mbox{and}\;a_0+2a_1+a_2\ne 0 \; \mbox{and}\;a_0-2a_1+a_2\ne 0.
\ee
So similarly, we have $(q-1)^3$ of these $a$. Add the numbers of the cases (6.5b2.1) and (6.5b2.2), we have 
\bea\label{e655}
b(\mu, t) = (q-1)^3(q+1). \quad (p\ne 2, 4\notdivides (q-1))
\eea
\par\medskip
Thus, multiply (\ref{e651}), (\ref{e652}), and (\ref{e655}) by $(q-1)^3$ and divide by $4$, we have the contribution of $\mu = \{4\}$ to the sum in (\ref{e26}):
{\small\bea\label{e656}
\qquad B(\{4\}):=\begin{cases} \frac{1}{4}q^3(q-1)^4, &\mbox{if $p=2$};\\
                                    \frac{1}{4}[P_4(q-1)^7+(1-P_4)(q-1)^6(q+1)], &\mbox{if $p\ne 2$}.
             \end{cases}
\eea}
\par\medskip
Finally, the number of isomorphism classes of $4$-dimensional evolution algebra over $\mathbb{F}_q$ is computed by 
\be
\frac{1}{(q-1)^4}\sum_{i=1}^5B(\{\mu_i\}),
\ee
where $\mu_1 = \{1,1,1,1\}$, $\mu_2=\{1,1,2\}$, $\mu_3=\{1,3\}$, $\mu_4=\{2,2\}$, $\mu_5=\{4\}$, and $B(\{\mu_i\}), 1\le i\le 5$, are given by (\ref{e619}), (\ref{e625}), (\ref{e637}), (\ref{e648}), and (\ref{e656}). To simplify our summary, we define the numbers $b_0, b_i, b_i', 1\le i\le 4$, as follows (recall the factor indication function $P_m$ defined by (\ref{e618})):
{\footnotesize\bea\label{e657}
b_0 &:=& \frac{B(\{\mu_1\})}{(q-1)^4} = \frac{1}{4!}[q^6(q+1)^2(q^2+1)(q^2+q+1)\\ \nonumber
          {} & {} & + P_3(12q(q+1)+6q^2(q+1)^2) + 48(P_7 +P_{15}) + 24P_5]; \nonumber
\eea 
\bea\label{e658}
\qquad\frac{B(\{\mu_2\})}{(q-1)^4} = \begin{cases} \frac{1}{4}q(q^6+q^5+2qP_3)=: b_1, & \mbox{if $p=2$},\\
                                     \frac{1}{4}(q-1)((q^4+q^3)(q^2+q+1)+2P_3)=: b_1', & \mbox{if $p\ne 2$};
                                     \end{cases}
\eea
\bea\label{e659}
\qquad \frac{B(\{\mu_3\})}{(q-1)^4} = \begin{cases} \frac{1}{3}q^4 =: b_2, &\mbox{if $p=3$},\\
                                    \frac{1}{3}[P_3(q-1)^2(q^2+q-1) \\
                                    +(1-P_3)q(q-1)(q+1)^2] =: b_2', &\mbox{if $p\ne 3$};
             \end{cases}
\eea
\bea\label{e6510}
\quad\frac{B(\{\mu_4\})}{(q-1)^4} = \begin{cases} \frac{1}{8}[q^5(q+1)+2P_3q^2]=:b_3, & \mbox{if $p=2$};\\
                                    \frac{1}{8}[q^2(q^2-1)^2+2P_3(q-1)^2]=:b_3', & \mbox{if $p\ne 2$}.
             \end{cases}
\eea
\bea\label{e6511}
\qquad\frac{B(\{\mu_5\})}{(q-1)^4} = \begin{cases} \frac{1}{4}q^3=:b_4, &\mbox{if $p=2$};\\
                                    \frac{1}{4}[P_4(q-1)^3+(1-P_4)(q-1)^2(q+1)]=:b_4', &\mbox{if $p\ne 2$}.
             \end{cases}
\eea
}
\par
\begin{theorem} Notation as above. The number of isomorphism classes $\mathcal{N}(4,\mathbb{F}_q)$ of $4$-dimensional idempotent evolution algebras over a finite field $\mathbb{F}_q$, where $q = p^m$, is given by the formulas in the table below:
{\small\begin{table}[h]
\centering 
\begin{tabular}{c|c} 
$p=2$ & $b_0+b_1+b_2'+b_3+b_4$\\
\hline
{} & {} \\
$p=3$ & $b_0+b_1'+b_2+b_3'+b_4'$\\
\hline
{} & {} \\
$p>3$ & $b_0+b_1'+b_2'+b_3'+b_4'$\\
\hline
\end{tabular}
\label{t5}
\end{table}}
\end{theorem}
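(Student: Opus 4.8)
The plan is to specialize Theorem~\ref{T1} to $n = 4$, substitute the value of $B(\mu)$ already obtained for each of the five partitions of $4$, and then divide by $(q-1)^4$.

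First I would apply Theorem~\ref{T1} with $\mathcal{P}(4) = \{\{1,1,1,1\},\{1,1,2\},\{1,3\},\{2,2\},\{4\}\}$, obtaining
\[
\mathcal{N}(4,\mathbb{F}_q) \;=\; \frac{1}{(q-1)^4}\Bigl(B(\{1,1,1,1\}) + B(\{1,1,2\}) + B(\{1,3\}) + B(\{2,2\}) + B(\{4\})\Bigr).
\]
Next I would substitute the explicit expressions (\ref{e619}), (\ref{e625}), (\ref{e637}), (\ref{e648}), and (\ref{e656}). Each of these is manifestly a multiple of $(q-1)^4$ --- every monomial appearing in them carries a factor $(q-1)^4$ --- so the five quotients $B(\mu)/(q-1)^4$ are precisely the quantities $b_0$, $b_1$ or $b_1'$, $b_2$ or $b_2'$, $b_3$ or $b_3'$, and $b_4$ or $b_4'$ defined in (\ref{e657})--(\ref{e6511}).

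The remaining step is purely organizational: selecting the correct branch of each piecewise definition according to the characteristic $p$ of $\mathbb{F}_q$. The term $b_0$ does not depend on $p$. For $B(\{1,1,2\})$, $B(\{2,2\})$, and $B(\{4\})$ the dichotomy is $p = 2$ versus $p \neq 2$, which gives $b_1, b_3, b_4$ when $p = 2$ and $b_1', b_3', b_4'$ otherwise; for $B(\{1,3\})$ the dichotomy is $p = 3$ versus $p \neq 3$, which gives $b_2$ when $p = 3$ and $b_2'$ otherwise. Using $p = 2 \neq 3$ and $p = 3 \neq 2$, the three cases combine to $b_0 + b_1 + b_2' + b_3 + b_4$ for $p = 2$, to $b_0 + b_1' + b_2 + b_3' + b_4'$ for $p = 3$, and to $b_0 + b_1' + b_2' + b_3' + b_4'$ for $p > 3$, which is exactly the content of the table.

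I do not expect a genuine obstacle at this stage: the substantive work was carried out in the five preceding subsections, and what is left is bookkeeping. The main difficulty is thus organizational: one must keep the factor indicators $P_3, P_4, P_5, P_7, P_{15}$ unresolved, since within a fixed characteristic $q - 1$ may or may not be divisible by each of $3, 4, 5, 7, 15$, and one should check that the three characteristic cases $p = 2$, $p = 3$, $p > 3$ are exhaustive. As a consistency check one may observe that each $b_i$, and hence the whole sum, is forced to be a non-negative integer for every prime power $q$, which follows a priori from Burnside's lemma (\ref{e21}) despite the denominators $24, 4, 3, 8, 4$ appearing in (\ref{e657})--(\ref{e6511}).
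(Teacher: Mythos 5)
Your proposal is correct and follows exactly the route the paper takes: Theorem \ref{T1} specialized to the five partitions of $4$, substitution of the values $B(\mu)$ from (\ref{e619}), (\ref{e625}), (\ref{e637}), (\ref{e648}), (\ref{e656}), division by $(q-1)^4$ to produce $b_0,b_i,b_i'$ as in (\ref{e657})--(\ref{e6511}), and the case split $p=2$, $p=3$, $p>3$ driven by the $p=2$ versus $p\ne 2$ and $p=3$ versus $p\ne 3$ dichotomies. Nothing is missing; the substantive content is indeed in the preceding subsections and the theorem is the bookkeeping you describe.
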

\par\medskip
\begin{example} By Theorem 6.1, the number of isomorphism classes of $4$-dimensional idempotent evolution algebras over $\mathbb{F}_2$ is  
\be
\mathcal{N}(4,\mathbb{F}_2) &=& \frac{1}{4!}(2^6(2+1)^2(2^2+1)(2^2+2+1))+\frac{1}{4}2(2^6+2^5)\\
                                               && + \frac{1}{3}(2(2-1)(2+1)^2)+\frac{1}{8}2^5(2+1)+ \frac{1}{4}2^3\\
                                               &=& 908,
\ee
and the number of isomorphism classes of $4$-dimensional idempotent evolution algebras over $\mathbb{F}_5$ is  
\be
\mathcal{N}(4,\mathbb{F}_5) &=& \frac{1}{4!}[5^6(5+1)^2(5^2+1)(5^2+5+1) + 24]\\
                                               && +  \frac{1}{4}(5-1)(5^4+5^3)(5^2+5+1)\\
                                               && + \frac{1}{3}5(5-1)(5+1)^2 + \frac{1}{8}5^2(5^2-1)^2 + \frac{1}{4}(5-1)^2(5+1)\\
                                               &=& 18,915,940.
\ee
\end{example}
\par\medskip
\section*{Acknowledgements} Wei acknowledges the support of National Natural Science Foundation of China (11961050) and the Guangxi Natural Science Foundation (2020GXNSFAA159053). Zou acknowledges the support of a Simons Foundation Collaboration Grant for Mathematicians (416937).
\par
The authors would like to thank the anonymous referee for valuable comments that improved the quality of this paper.
\par\medskip
 


\begin{thebibliography}{9999}
\bibitem{Art1} M. Artin, Algebra, second ed., Prentice Hall (2011).
\bibitem{Beh} A. Behn et al., Isomorphisms of four dimensional perfect non-simple evolution algebras. Associative and non-associative algebras and applications, 3-21, Springer Proc. Math. Stat., 311, Springer, Cham, (2020).
\bibitem{Cas1} Y. Cabrera Casado, M. Siles Molina, and M.V. Velasco, Evolution algebras of arbitrary dimension and their decompositions, Linear Algebra Appl. 495 (2016), 122-162.
\bibitem{Cas2} Y. Cabrera Casado, M. Siles Molina, and M.V. Velasco, Classification of three-dimensional evolution algebras, Linear Algebra Appl. 524 (2017), 68-108.
\bibitem{Cas3} Y. Cabrera Casado, M. Kanuni, and M. Siles Molina, Basic ideals in evolution algebras, Linear Algebra Appl. 570 (2019), 148-180. 
\bibitem{Cas4} Y. Cabrera Casado, M. Kanuni, and M. Siles Molina, Classification of four dimensional perfect non-simple evolution algebras, arXiv:1801.03856.
\bibitem{Cos1}  C. Costoya et al., Regular evolution algebras are universally finite, Proc. Amer. Math. Soc. 150 (2022), 919–925.
\bibitem{Cos2} C. Costoya et al., Automorphism groups of Cayley evolution algebras, Rev. R. Acad. Cienc. Exactas Fís. Nat. Ser. A Mat. 117:82 (2023).
\bibitem{Day} D. E. Daykin, On the rank of the matrix $f(A)$ and the enumeration of certain matrices over a finite field, J. London Math. Soc., 35 (1960) 36-42.
\bibitem{Eld1} A. Elduque and A. Labra, Evolution algebras and graphs. J. Algebra Appl. 14, no. 7 (2015), 1550103.
\bibitem{Eld2} A. Elduque and A. Labra, Evolution algebras, automorphisms, and graphs, Linear Multilinear Algebra 69 (2021), 331-342.
\bibitem{Kra} I. Kra and S. R. Simanca, On circulant matrices, AMS Notices, Vol. 59(3) (2012), 368-377.
\bibitem{Sag1} B. E. Sagan, The Symmetric Groups, second ed., Springer (2001).
\bibitem{Sri1} S. Sriwonga and Y. M. Zou, On automorphism groups of idempotent evolution algebras, Linear Algebra Appl. 641 (2022), 143-155.
\bibitem{Tian1} J. P. Tian, Evolution Algebras and Their Applications, Lecture Notes in Mathematics,
1921, Springer-Verlag, Berlin, 2008.
\end{thebibliography}
\end{document}